\newtheorem{theorem}{Theorem}
\newtheorem{corollary}[theorem]{Corollary}
\newtheorem{proposition}[theorem]{Proposition}
\newtheorem{conjecture}[theorem]{Conjecture}
\newtheorem{lemma}[theorem]{Lemma}
\theoremstyle{definition}
\newtheorem{definition}{Definition}
\newtheorem{obs}[theorem]{Observation}
\newtheorem{remark}[theorem]{Remark}
\newtheorem{quest}[theorem]{Question}
\definecolor{Grey}{gray}{0.5}
\DeclareMathAlphabet{\mathpzc}{OT1}{pzc}{m}{it}
\newcommand{\mc}[1]{\mathpzc{#1}}
\newcommand{\dg}{\ensuremath{{\rm dgn}}}
\newcommand{\sa}{{\rm sa}}
\newcommand{\lsa}{\ensuremath{{\rm sa}_{\ell}}}
\newcommand{\ca}{{\rm ca}}
\newcommand{\lca}{\ensuremath{{\rm ca}_{\ell}}}
\newcommand{\fca}{\ensuremath{{\rm ca}_{f}}}
\newcommand{\la}{{\rm la}}
\newcommand{\lla}{\ensuremath{{\rm la}_{\ell}}}
\newcommand{\fla}{\ensuremath{{\rm la}_{f}}}
\newcommand{\stw}{\ensuremath{{\rm stw}}}
\newcommand{\tw}{\ensuremath{{\rm tw}}}
\newcommand*{\dotcup}{\ensuremath{\mathaccent\cdot\cup}}
\newcommand{\rst}[1]{\ensuremath{{\mathbin|}%
\raise-.5ex\hbox{$#1$}}}
\title{Three ways to cover a graph}
\author{Kolja Knauer\thanks{Aix-Marseille Universit\'e, CNRS, LIF UMR 7279, 13288, Marseille, France. email:~\texttt{kolja.knauer@lif.univ-mrs.fr}}
  \and Torsten Ueckerdt\thanks{Karlsruhe Institute of Technology, Karlsruhe, Germany. email:~\texttt{torsten.ueckerdt@kit.edu}}
}
\begin{document}

\maketitle

\begin{abstract}
We consider the problem of covering an \emph{input graph} $H$ with graphs from a fixed \emph{covering class} $\mc{G}$. The classical covering number of $H$ with respect to $\mc{G}$ is the minimum number of graphs from $\mc{G}$ needed to cover the edges of $H$ without covering non-edges of $H$. We introduce a unifying notion of three covering parameters with respect to $\mc{G}$, two of which are novel concepts only considered in special cases before: the local and the folded covering number. Each parameter measures ``how far'' $H$ is from $\mc{G}$ in a different way.  Whereas the folded covering number has been investigated thoroughly for some covering classes, e.g., interval graphs and planar graphs, the local covering number has received little attention.

We provide new bounds on each covering number with respect to the following covering classes: linear forests, star forests, caterpillar forests, and interval graphs. The classical graph parameters that result this way are interval number, track number, linear arboricity, star arboricity, and caterpillar arboricity. As input graphs we consider graphs of bounded degeneracy, bounded degree, bounded tree-width or bounded simple tree-width, as well as outerplanar, planar bipartite, and planar graphs. For several pairs of an input class and a covering class we determine exactly the maximum ordinary, local, and folded covering number of an input graph with respect to that covering class.
\end{abstract}

\setcounter{page}{1}
\section{Introduction}\label{sec:int}

Graph covering is one of the most classical topics in graph theory. In 1891, in one of the first purely graph-theoretical papers, Petersen~\cite{Pet-91} showed that any $2r$-regular graph can be covered with $r$ sets of vertex disjoint cycles. A survey on covering problems by Beineke~\cite{Bei-69} appeared in 1969. Graph covering is a lively field with deep ramifications -- over the last decades as well as today~\cite{Har-70,Har-72,Aki-80,Aki-81,Gra-88,Mut-98}. This is supported through the course of this paper by many references to recent works of different authors.

In every graph covering problem one is given an input graph $H$, a covering class $\mc{G}$, and a notion of how to cover $H$ with one or several graphs from $\mc{G}$. One is then interested in $\mc{G}$-coverings of $H$ that are in some sense simple, or well structured; the most prevalent measure of simplicity being the number of graphs from $\mc{G}$ needed to cover the edges of $H$.

The main goal of this paper is to introduce the following three parameters, each of which represents how well $H$ can be covered with respect to $\mc{G}$ in a different way: 

The \emph{global covering number}, or simply \emph{covering number}, is the most classical one. It is the smallest number of graphs from $\mc{G}$ needed to cover the edges of $H$ without covering non-edges of $H$. All kinds of arboricities, e.g. star~\cite{Aki-85}, caterpillar~\cite{Gon-09}, linear~\cite{Aki-81}, pseudo~\cite{Pic-82}, and ordinary~\cite{Nas-64} arboricity of a graph are global covering numbers, where the covering class is the class of star forests, caterpillar forests, linear forests, pseudoforests, and ordinary forests, respectively. Other global covering numbers are the planar and outerplanar thickness~\cite{Bei-69,Mut-98} and the track number~\cite{Gya-95} of a graph. Here, the covering classes are planar, outerplanar, and interval graphs, respectively. 

In the \emph{local covering number} of $H$ with covering class $\mc{G}$ one also tries to cover the edges of $H$ with graphs from $\mc{G}$ but now minimizes the largest number of graphs in the covering containing a common vertex of $H$. We are aware of only two local covering numbers in the literature: {The bipartite degree} introduced by Fishburn and Hammer~\cite{Fis-96} is the local covering number where the covering class is the class of complete bipartite graphs. It was rediscovered by Dong and Liu~\cite{Don-07} as {the local biclique cover number}, and recently it has been studied in comparison with its global variant by Pinto~\cite{Pin-13}. {The local clique cover number} is another local covering number, where the covering class is the class of complete graphs. It was studied by Skums, Suzdal, and Tyshkevich~\cite{Sku-09} and by Javadi, Maleki, and Omoomi~\cite{Jav-12}. 

Finally, the \emph{folded covering number} underlies a different, but related, concept of covering. Here, one looks for a graph in $\mc{G}$ which has $H$ as homomorphic image and one minimizes the size of the largest preimage of a vertex of $H$. Equivalently, one splits every vertex of $H$ into a independent set such that the size of the largest such independent set is minimized, distributing the incident edges to the new vertices, such that the result is a graph from $\mc{G}$. The folded covering number has been investigated using interval graphs and planar graphs as covering class. In the former case the folded covering number is known as the interval number~\cite{Har-79}, in the latter case as the splitting-number~\cite{Jac-85}.

While some covering numbers, like arboricities, are of mainly theoretical interest, others, like thickness, interval number, and track number, have wide applications in VLSI design~\cite{Agg-91}, network design~\cite{Ram-92}, scheduling and resource allocation~\cite{Bar-06,But-10}, and bioinformatics~\cite{Jos-92,Jia-10a}. The three covering numbers presented here not only unify some notions in the literature, they as well seem interesting in their own right and may provide new approaches to attack classical open problems.

\begin{table}[htb]
 \centering
 \def\sm{\scriptsize}
 \renewcommand{\arraystretch}{1.1}
 \begin{tabular}{!{\vrule width 2\arrayrulewidth}>{\centering}m{1.5cm}!{\vrule width 2\arrayrulewidth}c|c!{\vrule width 2\arrayrulewidth}>{\centering}m{0.8cm}|c|>{\centering}m{1.55cm}!{\vrule width 2\arrayrulewidth}>{\centering}m{1.55cm}|c|c!{\vrule width 2\arrayrulewidth}}
  \Xcline{2-9}{2\arrayrulewidth}
  \multicolumn{1}{c!{\vrule width 2\arrayrulewidth}}{~} & \multicolumn{2}{c!{\vrule width 2\arrayrulewidth}}{star forests} & \multicolumn{3}{c!{\vrule width 2\arrayrulewidth}}{caterpillar forests} & \multicolumn{3}{c!{\vrule width 2\arrayrulewidth}}{interval graphs}\\
  \multicolumn{1}{c!{\vrule width 2\arrayrulewidth}}{~} & $g$ & $\ell=f$ & $g$ & $\ell$ & $f$ & $g$ & $\ell$ & $f$ \\
  \Xhline{2\arrayrulewidth}
  outer-planar & {\sm \textcolor{Grey}{$3$~\cite{Hak-96}}} & {\sm \textcolor{Grey}{$3$}} & {\sm \textcolor{Grey}{$3$~\cite{Kos-99}}} & {\sm \textcolor{Grey}{$3$}} & {\sm \textcolor{Grey}{$3$}} & $2$ {\sm \cite{Kos-99}} & {\sm \textcolor{Grey}{$2$}} & $2$ {\sm \cite{Sch-83}}\\
  \hline
  planar bipartite& $4$ {\sm (C\ref{cor:loc-star-planar})} & $3$ {\sm (C\ref{cor:loc-star-planar})} & $4$ {\sm \cite{Gon-07}} & {\sm \textcolor{Grey}{$3$}} & {\sm \textcolor{Grey}{$3$}} & {\sm \textcolor{Grey}{$4$}} & {\sm \textcolor{Grey}{$3$}} & $3$ {\sm \cite{Sch-83}}\\
  \hline
  planar & $5$ {\sm \cite{Alg-89,Hak-96}} & $4$ {\sm (C\ref{cor:loc-star-planar})} & $4$ {\sm \cite{Gon-07}} & {\sm \textcolor{Grey}{$4$}} & $4$ {\sm \cite{Sch-83}} & $4$ {\sm \cite{Gon-09}} & $?$ & $3$ {\sm \cite{Sch-83}}\\
  \Xhline{2\arrayrulewidth}
  $\stw\leq k$ & {\sm \textcolor{Grey}{$k$+$1$}} & {\sm \textcolor{Grey}{$k$+$1$}} & {\sm \textcolor{Grey}{$k$+$1$}} & {\sm \textcolor{Grey}{$k$+$1$}} & $k$+$1$ {\sm (T\ref{thm:fca-stw-lower})} & $k$+$1$ {\sm (T\ref{thm:t-stw-lower})} & $k$ {\sm (T\ref{thm:tl-stw-upper})} & $k$ {\sm (T\ref{thm:i-tw-lower})} \\
  \hline
  $\tw \leq k$ & $k$+$1$ {\sm \cite{Din-98,Duj-07}} & {\sm \textcolor{Grey}{$k$+$1$}} & {\sm \textcolor{Grey}{$k$+$1$}} & {\sm \textcolor{Grey}{$k$+$1$}} & {\sm \textcolor{Grey}{$k$+$1$}} & {\sm \textcolor{Grey}{$k$+$1$}} & {\sm \textcolor{Grey}{$k$+$1$}} & $k$+$1$ {\sm (T\ref{thm:i-tw-lower})}\\
  \hline
  $\dg \leq k$ & $2\,k$ {\sm \cite{Alo-92}} & $k$+$1$ {\sm (C\ref{cor:lsa-deg-upper})} & {\sm \textcolor{Grey}{$2\,k$}} & {\sm \textcolor{Grey}{$k$+$1$}} & {\sm \textcolor{Grey}{$k$+$1$}} & $2\,k$ {\sm (T\ref{thm:t-deg-lower})} & {\sm \textcolor{Grey}{$k$+$1$}} & {\sm \textcolor{Grey}{$k$+$1$}} \\
  \Xhline{2\arrayrulewidth}
 \end{tabular}\label{table:1}
 \vspace{2pt}
 \caption{Overview of results.}
\end{table}

In this paper we moreover present new lower and upper bounds for several covering numbers. In the new results, the covering classes are: interval graphs, star forests, linear forests, and caterpillar forests. The input classes are: graphs of bounded degeneracy, bounded tree-width or bounded simple tree-width, as well as outerplanar, planar bipartite, planar, and regular graphs. Not all pairs of these input classes with these covering classes are given new bounds. We provide an overview over some of our new results in Table~\ref{table:1}. Each row of the table corresponds to an input class $\mc{H}$, each column to a covering class $\mc{G}$. Every cell contains the maximum covering number among all graphs $H \in \mc{H}$ with respect to the covering class $\mc{G}$, where the columns labeled $g, \ell, f$ stand for the global, local, and folded covering number, respectively. Grey entries follow by Proposition~\ref{prop:basic} from other stronger results in the table. Letters T and C stand for Theorem and Corollary in the present paper, respectively. Indeed all the entries except the '?' in Table~\ref{table:1} are exact, with matching upper and lower bounds. Note that besides results we prove as new theorems as indicated, many values in the table (written in gray) follow from the point of view offered by our general approach (Proposition~\ref{prop:basic}). 

This paper is structured as follows:
In order to give a motivating example before the general definition, we start by discussing in Section~\ref{sec:linear arboricity} the linear arboricity and its local and folded variants. In Section~\ref{sec:covers-and-numbers} the three covering numbers are formally introduced and some general properties are established. In Section~\ref{sec:guest} we introduce the covering classes star forests, caterpillar forests, and interval graphs, and in Section~\ref{sec:results} we present our results claimed in Table~\ref{table:1}. In Section~\ref{sec:complexity} we briefly discuss the computational complexity of some covering numbers, giving a polynomial-time algorithm for the local star arboricity. Moreover, we discuss by how much global, local and folded covering numbers can differ.

For the entire paper we assume all graphs to be simple without loops nor multiple edges. Notions used but not introduced can be found in any standard graph theory book; such as~\cite{Wes-96}.

\section{Global, Local, and Folded Linear Arboricity}\label{sec:linear arboricity}

We give the general definitions of covers and covering numbers in Section~\ref{sec:covers-and-numbers} below. In this section we motivate and illustrate these concepts on the basis of one fixed covering class: the class $\mc{L}$ of \emph{linear forests}, which are the disjoint unions of paths. We want to cover an input graph $H$ by several linear forests $L_1,\ldots,L_k \in \mc{L}$. That is, every edge $e \in E(H)$ is contained in at least\footnote{Since linear forests are closed under taking subgraphs, we can indeed assume that $e \in L_i$ for exactly one $i \in [k]$.} one $L_i$ and no non-edge of $H$ is contained in any $L_i$. When $H$ is covered by $L_1,\ldots,L_k$ we write $H = \bigcup_{i \in [k]}L_i$.

The \emph{linear arboricity of $H$}, denoted by $\la(H)$, is the minimum $k$ such that $H = \bigcup_{i \in [k]} L_i$ and $L_i \in \mc{L}$ for $i \in [k]$. One easily sees that every graph $H$ of maximum degree $\Delta(H)$ has $\la(H) \geq \left\lceil \frac{\Delta(H)}{2} \right\rceil$, and every $\Delta(H)$-regular graph $H$ has $\la(H) \geq \left\lceil \frac{\Delta(H)+1}{2} \right\rceil$. In 1980, Akiyama \textit{et al.}~\cite{Aki-81} stated the Linear Arboricity Conjecture (LAC). It says that the linear arboricity of any simple graph $H$ of maximum degree $\Delta(H)$ is either $\left\lceil\frac{\Delta(H)}{2}\right\rceil$ or $\left\lceil\frac{\Delta(H)+1}{2}\right\rceil$. LAC was confirmed for planar graphs by Wu and Wu~\cite{Wu-99,Wu-08} and asymptotically for general graphs by Alon and Spencer~\cite{Alo-08}. The general conjecture remains open. The best-known general upper bound for $\la(H)$ is $\left\lceil \frac{3\Delta(H)+2}{5}\right\rceil$, due to Guldan~\cite{Gul-86}.

We define the \emph{local linear arboricity of $H$}, denoted by $\lla(H)$, as the minimum $j$ such that $H = \bigcup_{i \in [k]} L_i$ for some $k$ and every vertex $v$ in $H$ is contained in at most $j$ different $L_i$. Again, if $H$ has maximum degree $\Delta(H)$, then $\lla(H) \geq \left\lceil \frac{\Delta(H)}{2} \right\rceil$, and if $H$ is $\Delta(H)$-regular, then $\lla(H) \geq \left\lceil \frac{\Delta(H)+1}{2} \right\rceil$. Note that $\lla(H)$ is at most $\la(H)$, and hence the following statement must necessarily hold for LAC to be true.

\begin{conjecture}{Local Linear Arboricity Conjecture (LLAC):}\label{conj:LLAC}
 The local linear arboricity of any simple graph with $H$ maximum degree $\Delta(H)$ is either $\left\lceil \frac{\Delta(H)}{2} \right\rceil$ or $\left\lceil \frac{\Delta(H)+1}{2} \right\rceil$.
\end{conjecture}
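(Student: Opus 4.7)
The lower bound $\lla(G)\ge\lceil\Delta/2\rceil$ is immediate: any linear forest contributes at most two edges at a fixed vertex, so at least $\lceil\Delta/2\rceil$ forests of any cover must contain a vertex of degree $\Delta$. Hence the content of LLAC is the matching upper bound $\lla(G)\le\lceil(\Delta+1)/2\rceil$. As a preliminary step I would reduce to the case that $G$ is $\Delta$-regular by embedding $G$ as an induced subgraph of a $\Delta$-regular graph $G^*$ of the same maximum degree (take two disjoint copies of $G$ and iteratively match up vertices of deficient degree); any local linear cover of $G^*$ restricts to $G$ with no larger local parameter.

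For even $\Delta$ I would invoke Petersen's $2$-factor theorem to decompose $E(G)$ into $\Delta/2$ edge-disjoint $2$-factors $F_1,\dots,F_{\Delta/2}$, each a disjoint union of cycles through every vertex. From every cycle $C$ of every $F_i$, select a single edge $e_C$ to delete; the remainder $L_i=F_i\setminus\{e_C:C\subseteq F_i\}$ is then a linear forest containing every vertex. If the deletion set $R=\{e_C\}$ is itself a linear forest, we may set $L_0:=R$ and obtain a cover $L_0,L_1,\dots,L_{\Delta/2}$ in which every vertex lies in at most $\Delta/2+1=\lceil(\Delta+1)/2\rceil$ forests.

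For odd $\Delta$ I would peel off a perfect matching $M$ (which a $\Delta$-regular graph of even order possesses by Petersen's theorem, modulo standard reductions for bridges), apply the even case to the residual $(\Delta-1)$-regular graph with local parameter $(\Delta-1)/2+1=(\Delta+1)/2$, and then attempt to absorb $M$ into the cover: directly when adding an $M$-edge to some incident $L_i$ leaves both endpoints of degree at most two in $L_i$, and otherwise via an alternating-path swap in $M\cup L_i$ that restores feasibility without exceeding the bound $\lceil(\Delta+1)/2\rceil$.

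The main obstacle is plainly the choice of the break edges $e_C$ in the even case, so that $R$ has maximum degree at most two \emph{and} is acyclic. A Lovász Local Lemma argument on independent uniform choices of $e_C$ within each cycle is natural and should control the degree of $R$ when all cycles are sufficiently long, but short cycles and the non-local acyclicity constraint introduce genuine dependencies across the factors. This is essentially the same structural obstruction that has kept LAC itself open, so one should expect LLAC to be of comparable depth; the hope is that the local relaxation --- permitting arbitrarily many global forests provided no vertex appears in many of them --- provides enough slack to bypass the rigidity of global edge partitioning that obstructs a direct attack on LAC.
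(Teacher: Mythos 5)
This statement is posed in the paper as an open \emph{conjecture}, not a theorem: the paper explicitly presents LLAC as a necessary weakening of the Linear Arboricity Conjecture and offers no proof (it only proves the further weakening for the \emph{folded} linear arboricity, Theorem~\ref{thm:FLAC}, via an Eulerian-tour argument). So there is no proof in the paper against which your attempt could be matched, and your attempt does not close the conjecture either.

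The concrete gap is exactly where you locate it, but it is worth being precise about why it is fatal rather than a technicality. In the even case, after Petersen's decomposition into $2$-factors $F_1,\dots,F_{\Delta/2}$, every vertex already lies in all $\Delta/2$ forests $L_i$, so the entire budget of ``$+1$'' must absorb the break-edge set $R$. For the bound $\Delta/2+1$ you therefore need every vertex to meet the cover of $R$ at most once, which forces $R$ to have maximum degree at most $2$ \emph{and} to be acyclic (a cycle component of $R$ cannot be covered by a single linear forest). Nothing in the construction controls this: a vertex may be selected as an endpoint of a break edge in every one of the $\Delta/2$ factors, and a Local Lemma argument over independent choices of $e_C$ breaks down precisely for short cycles (triangles and $4$-cycles give constant, not vanishing, failure probabilities) and cannot express the global acyclicity constraint at all. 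Choosing one edge per cycle of a $2$-factor decomposition so that the chosen edges form a linear forest is essentially the same rigidity that obstructs LAC itself, so the ``local slack'' you hope for does not actually materialize in this approach: the quantity being bounded at each vertex ($\Delta/2$ forced memberships plus the contribution of $R$) is identical to what a global argument would need. The odd case inherits the same difficulty, and the ``alternating-path swap'' for absorbing the matching $M$ is unspecified and would itself need to avoid creating vertices of degree $3$ in some $L_i$ or new memberships elsewhere along the swap path. In short: the lower bound and the regularization are fine, but the heart of the matter is untouched, which is consistent with the paper leaving LLAC open.
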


\begin{obs}
 To prove LAC or LLAC it suffices to consider regular graphs of odd degree: Regularity is obtained by considering a $\Delta(H)$-regular supergraph of $H$. If $\Delta(H)$ is even, say $\Delta(H) = 2k$, one can find a spanning linear forest $L_{k+1}$ in $H$~\cite{Gul-86}, remove it from the graph, and extend $L_{k+1}$ by a cover $L_1,\ldots,L_k$ in the remaining graph of maximum degree $\Delta(H)-1 = 2k-1$. 
\end{obs}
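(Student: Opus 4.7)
The plan is to decompose the observation into two independent reductions: first from arbitrary graphs of maximum degree $\Delta$ to $\Delta$-regular graphs, and then from regular graphs of even $\Delta$ to regular graphs of odd maximum degree.

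For the regularity reduction, I would invoke the standard fact that any graph $G$ with maximum degree $\Delta$ is a spanning subgraph of some $\Delta$-regular graph $H$ (for instance, take two disjoint copies of $G$, add an edge between every pair of corresponding vertices of deficient degree, and iterate until all degrees equal $\Delta$). Given any cover $H = \bigcup_{i\in [k]} L_i$ with $L_i \in \mc{L}$, restricting each $L_i$ to $E(G)$ yields a cover of $G$ by linear forests, because every subgraph of a linear forest is itself a linear forest, and no vertex can be incident to more of the restricted templates than of the original ones. Hence $\la(G) \le \la(H)$ and $\lla(G) \le \lla(H)$, so it suffices to verify LAC (respectively LLAC) on $\Delta$-regular graphs.

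For the parity reduction, suppose $G$ is $\Delta$-regular with $\Delta = 2k$. I would apply Guldan~\cite{Gul-86} to extract a spanning linear forest $L_{k+1} \subseteq G$ that meets every vertex; then $G' := G \setminus L_{k+1}$ has maximum degree at most $2k - 1$. Assuming LAC (respectively LLAC) has been established for the odd case, $G'$ admits a cover by $\lceil(2k-1)/2\rceil = k$ linear forests $L_1,\dots,L_k$, with each vertex lying in at most $k$ of them. Adjoining $L_{k+1}$ yields a cover of $G$ by $k+1 = \lceil (\Delta+1)/2\rceil$ linear forests, with every vertex in at most $k+1$ of them, both figures being within the range permitted by LAC/LLAC for $\Delta = 2k$.

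The main subtlety lies in invoking the correct form of Guldan's theorem, namely that it produces a linear forest meeting every vertex of maximum degree, so that removal actually decreases the max degree from $2k$ to $2k-1$, rather than merely any spanning subgraph that happens to be a linear forest. This is precisely what~\cite{Gul-86} guarantees, so it may be used as a black box; the remaining steps are routine bookkeeping of the global and local count in the resulting union.
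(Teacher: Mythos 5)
Your proof is correct and follows essentially the same route as the paper's own (inline) argument: pass to a $\Delta$-regular supergraph and restrict the cover, then for even $\Delta=2k$ remove a Guldan spanning linear forest to reduce to maximum degree $2k-1$ and adjoin it to the $k$ forests covering the remainder. Your expansion of the bookkeeping for the local count and the remark on which form of Guldan's result is needed are both accurate and consistent with the paper.
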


If $H$ is regular with odd degree, then LLAC states that $H = \bigcup_{i \in [k]} L_i$ with every vertex being an endpoint of exactly one path. LAC additionally requires that the paths can be colored with $\left\lceil \frac{\Delta(H)}{2} \right\rceil$ colors such that no two paths that share a vertex receive the same color. We will see in later sections that sometimes the coloring is the crucial and difficult task.

Next we propose a second way to cover the input graph $H$ with linear forests. A \emph{walk} in $H$ is a sequence of consecutively incident edges of $H$ of the form $\{v_1v_2, v_2v_3, \ldots, v_{k-1}v_k\}$ for $v_1, \ldots v_k$ being vertices of $H$. As before, a set $W_1,\ldots,W_k$ of walks covers $H$, denoted by $H = \bigcup_{i \in [k]}W_i$, if the edge-set $E$ of $H$ is the union of the edge-sets of the walks. We are now interested in how often a vertex $v$ in $H$ appears in the walks $W_1,\ldots,W_k$ in total. The \emph{folded linear arboricity of $H$}, denoted by $\fla(H)$, is the minimum $j$ such that $H = \bigcup_{i \in [k]}W_i$ and every vertex $v$ in $H$ appears at most $j$ times in the walks $W_1,\ldots,W_k$. Again if $H$ has maximum degree $\Delta(H)$ then $\fla(H) \geq \left\lceil \frac{\Delta(H)}{2} \right\rceil$, and if $H$ is $\Delta(H)$-regular then $\fla(H) \geq \left\lceil \frac{\Delta(H)+1}{2} \right\rceil$. Clearly, $\fla(H) \leq \lla(H)$. The next theorem follows directly from a short proof of West~\cite{Wes-89} of a result previously published by Griggs and West~\cite{Gri-80} (where it is stated in terms of the interval number $i(H)$). It is a weakening of LLAC 
above.

\begin{theorem}\label{thm:FLAC}
 If $H$ has maximum degree $\Delta(H)$ then $\fla(H) \in \{\left\lceil \frac{\Delta(H)}{2}\right\rceil, \left\lceil \frac{\Delta(H)+1}{2}\right\rceil\}$.
\end{theorem}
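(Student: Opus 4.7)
The lower bound $\fla(G) \geq \lceil \Delta/2 \rceil$ has already been recorded in the paragraph preceding the statement, since each visit of a walk to a vertex $v$ absorbs at most two of $v$'s incident $G$-edges. My plan is therefore to establish the matching upper bound $\fla(G) \leq \lceil(\Delta+1)/2\rceil$ by constructing an explicit walk cover.

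I would argue by trail decomposition, treating each connected component $C$ of $G$ separately. If $C$ contains $2k>0$ odd-degree vertices, pair them arbitrarily, add a dummy edge for each pair so that the resulting multigraph has all even degrees, take an Eulerian circuit of it, and cut the circuit at the $k$ dummy edges; this yields $k$ open trails in $G$ such that every odd-degree vertex of $C$ is the endpoint of exactly one of them. If $C$ has only even degrees, take a single closed Eulerian trail of $C$ with an arbitrarily chosen start vertex. Each trail is a walk in $G$, and collectively they cover $E(G)$.

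The appearance count at a vertex $v$ of degree $d_v$ is then $d_v/2 + E_v/2$, where $E_v$ records $v$'s endpoint incidences across the trails: $E_v=1$ if $v$ is odd-degree (endpoint of exactly one open trail), $E_v=2$ if $v$ is the chosen start of a closed trail, and $E_v=0$ otherwise; these two contributions cannot coexist on a single vertex, since closed trails only arise in all-even components. For $\Delta$ odd, the worst case is an odd-degree $v$ with $d_v=\Delta$, contributing $(\Delta+1)/2 = \lceil(\Delta+1)/2\rceil$; for $\Delta$ even, odd-degree vertices have $d_v\leq\Delta-1$ and contribute at most $\Delta/2$, while an even-degree closed-trail start of maximum degree $\Delta$ contributes $\Delta/2+1 = \lceil(\Delta+1)/2\rceil$. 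Either way the folding is at most $\lceil(\Delta+1)/2\rceil$.

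The main obstacle is exactly the bookkeeping above, in particular ensuring that the $+1/2$ coming from odd-degree endpoints and the $+1$ coming from closed-trail starts never stack on a single vertex---this is what pins the bound at $\lceil(\Delta+1)/2\rceil$ rather than something larger. Alternatively, as the paper hints, one may identify $\fla(G)$ with the interval number $i(G)$---a walk cover gives an interval representation by placing each walk on its own horizontal track as a chain of unit intervals, and a sweep-line argument recovers a walk cover from an interval representation---and then quote West's bound $i(G)\leq\lceil(\Delta+1)/2\rceil$ from \cite{Wes-89}.
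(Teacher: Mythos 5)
Your argument is correct and is essentially the paper's proof: both reduce to an Eulerian-tour/trail decomposition in which the odd-degree vertices are made even by an auxiliary construction (the paper adds one new vertex joined to all odd-degree vertices, you add dummy edges pairing them up) and then count vertex appearances, and your bookkeeping of the endpoint contributions is accurate. Your version is merely a more detailed write-up of the same idea.
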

\begin{proof}
 Add a vertex $x$ to $H$ and connect it to every vertex in $H$ of odd degree. Each component of the resulting graph is Eulerian. Consider any Eulerian tour in $H \cup x$ (or $H$) and split it into shorter walks by removing $x$ from it.
\end{proof}

\section{Covers and Covering Numbers}\label{sec:covers-and-numbers}

In this section we formalize the concepts from Section~\ref{sec:linear arboricity} with respect to general covering and input classes and obtain some general inequalities. The notation we introduce is convenient for making our generalized approach as transparent as possible. When treating concrete covering classes, for which covering numbers already have an established notation in the literature later on in the paper, we will use the latter in order to make results more accessible to readers already familiar with the parameters.

A \emph{homomorphism} from a graph $G$ to a graph $H$ is a map $\varphi:V(G)\to V(H)$ such that $vw\in E(G)$ implies $\varphi(v)\varphi(w)\in E(H)$. We call a homomorphism \emph{edge-surjective} if for all $v'w'\in E(H)$ there exists $vw\in E(G)$ such that $\varphi(v)=v'$ and $\varphi(w)=w'$. For an input graph $H$ and a covering class $\mc{G}$, we define a \emph{$\mc{G}$-cover of $H$} as an edge-surjective homomorphism $\varphi:G_1 \dotcup G_2 \dotcup \cdots \dotcup G_k\to H$, where $G_i \in \mc{G}$ for $i \in [k]$ and $\dotcup$ denotes the vertex disjoint union. The \emph{size} of a cover is the number of covering graphs in the disjoint union. A cover $\varphi$ is called \emph{injective} if $\varphi\rst{G_i}$, that is, $\varphi$ restricted to $G_i$, is injective for every $i \in [k]$.

\begin{definition}\label{defn:numbers}
 For a covering class $\mc{G}$ and an input graph $H$ define the \emph{(global) covering number} $c_{g}^{\mc{G}}(H)$, the \emph{local covering number} $c_{\ell}^{\mc{G}}(H)$, and the \emph{folded covering number} $c_{f}^{\mc{G}}(H)$ as follows:
 \begin{itemize}[label=]
  \item $c_{g}^{\mc{G}}(H) = \min\left\{ \text{size of }\varphi : \varphi \text{ is an injective }\mc{G}\text{-cover of }H\right\}$
  \item $c_{\ell}^{\mc{G}}(H) = \min\left\{ \max_{v \in V(H)} |\varphi^{-1}(v)| : \varphi \text{ is an injective }\mc{G}\text{-cover of }H\right\}$
  \item $c_{f}^{\mc{G}}(H) = \min\left\{ \max_{v \in V(H)} |\varphi^{-1}(v)| : \varphi \text{ is a }\mc{G}\text{-cover of }H \text{ having size } 1\right\}$
 \end{itemize}
\end{definition}

Let us rephrase $c_{g}^{\mc{G}}(H)$, $c_{\ell}^{\mc{G}}(H)$, and $c_{f}^{\mc{G}}(H)$. The covering number is the minimum number of graphs in $\mc{G}$ needed to cover $H$ exactly, where \emph{covering exactly} means identifying subgraphs in $H$ that are covering graphs, such that every edge of $H$ is contained in some covering graph. In the local covering number the number of covering graphs is not restricted; instead the number of covering graphs at every vertex should be small. We will see later that these two numbers can differ significantly. The folded covering number is the minimum $k$ such that every vertex $v$ of $H$ can be split into at most $k$ vertices, distributing the incident edges at $v$ arbitrarily (even repeatedly) among them, such that the resulting graph belongs to $\mc{G}$. The splitting corresponds to representing the vertex by the set of its preimages under the edge-surjective homomorphism $\varphi$.

One is often interested in the maximum or minimum value of a graph parameter on a class of input graphs. For $i \in \{g,\ell,f\}$, a covering class $\mc{G}$, and an input graph class $\mc{H}$, we define $c_{i}^{\mc{G}}(\mc{H}) = \sup\left\{ c_{i}^{\mc{G}}(H) \colon\ H \in \mc{H} \right\}$. We close this section with a list of inequalities, most of which are elementary applications of Definition~\ref{defn:numbers} and homomorphisms.

\begin{proposition}\label{prop:basic}
 For covering classes $\mc{G},\mc{G}'$, input classes $\mc{H},\mc{H}'$ and any input graph $H$ we have the following:
 \begin{enumerate}[label = (\roman*)]
  \item $c_g^{\mc{G}}(H)\geq c_{\ell}^{\mc{G}}(H)$, and if $\mc{G}$ is closed under disjoint union, then $c_{\ell}^{\mc{G}}(H) \geq c_f^{\mc{G}}(H)$\label{enum:inequalities}.
  \item If $\mc{G}$ is closed under merging non-adjacent vertices within connected components (and afterwards deleting multiple edges) and restriction to maximal connected components, then $c_{\ell}^{\mc{G}}(H)\leq c_f^{\mc{G}}(H)$.\label{enum:local-eq-folded} 
  \item If $\mc{H}\subseteq \mc{H}'$, then $c_i^{\mc{G}}(\mc{H})\leq c_i^{\mc{G}}(\mc{H}')$ for $i\in\{g,\ell,f\}$.\label{enum:subclass-host}
  \item If $H_{\mc{G}}$ and $H_{\mc{G}'}$ denote the set of subgraphs of $H$ that are homomorphic images of graphs in $\mc{G}$ and $\mc{G}'$, respectively, then $H_{\mc{G}}\subseteq H_{\mc{G}'}$ implies $c_i^{\mc{G}}(H) \geq c_i^{\mc{G}'}(H)$ for $i\in\{g,\ell,f\}$. This holds in particular when $\mc{G}\subseteq \mc{G}'$.\label{enum:subclass-guest}
  \item If $\bar{H}$ denotes the set of all subgraphs of $H$ and we have $\mc{G}\cap\bar{H} \subseteq \mc{G}'\cap\bar{H}$, then $c_i^{\mc{G}}(H) \geq c_i^{\mc{G}'}(H)$ for $i\in\{g,\ell\}$.\label{enum:intersection-gl}
 \end{enumerate}
\end{proposition}
\begin{proof}
 The first inequality in~\ref{enum:inequalities} follows from the definition, the second one comes by viewing an injective cover $G_1 \dotcup G_2 \dotcup \cdots \dotcup G_k$ as a $\mc{G}$-cover of size $1$.
 
 To see~\ref{enum:local-eq-folded}, let $\varphi:G\to H$ be a $\mc{G}$-cover of $H$ of size $1$ witnessing $c_f^{\mc{G}}(H)$. Now for every $v\in H$ and a component $G'$ of $G$ merge all $\varphi^{-1}(v)\cap V(G')$ into one vertex (and delete multiple edges). Since $H$ has no loops, the merging process creates no loops. Doing this for all components of $G$ yields a new covering graph $\widetilde{G}\in\mc{G}$ with homomorphism $\widetilde{\varphi}$ being injective on each component. Clearly, $|\widetilde{\varphi}^{-1}(v)|\leq|\varphi^{-1}(v)|$.
 
 Claims~\ref{enum:subclass-host} and~\ref{enum:subclass-guest} follow immediately from the definition. To see~\ref{enum:intersection-gl} note that it follows similarly as~\ref{enum:subclass-guest}, because $\mc{G}\cap\bar{H}$ and $\mc{G}'\cap\bar{H}$ are the subgraphs of $H$ that arise as images of \emph{injective} covers.
\end{proof}

\begin{remark}
Within the scope of this paper {we only consider covering classes that are closed under disjoint union} even without explicitly saying so. For example, when considering stars or complete graphs as covering graphs, we actually mean star forests and disjoint unions of complete graphs, respectively. If the covering class $\mc{G}$ is closed under disjoint union, then the restriction to covers of size $1$ in the definition of $c_{f}^{\mc{G}}$ is unnecessary. 

 It is still interesting to consider covering classes that are {not} closed under disjoint union. Haj\'os' Conjecture~\cite{Lov-68} states that the edges of any $n$-vertex Eulerian graph $H$ may be partitioned into $\lfloor \frac{n}{2} \rfloor$ cycles. Haj\'os' Conjecture being widely open, one may consider {coverings} with cycles. When $\mc{C'}$ denotes the class of all simple cycles and $H$ is an $n$-vertex Eulerian graph, Fan~\cite{Fan-03} proved $c_{g}^{\mc{C'}}(H) \leq \lfloor \frac{n-1}{2} \rfloor$. 
\end{remark}

\subsubsection*{Example}
 
In order to illustrate the notions introduced above, consider the covering class $\mc{C}$ of disjoint unions of cycles. As input graph $H$ we take the Petersen graph. See Figure~\ref{fig:petersen} where we have from left to right: A global cover with three unions of cycles, a local cover of size five with at most three cycles at each vertex, and a folded cover with two preimages per vertex. Note that the local cover does not yield an optimal global cover.

\begin{figure}[htb]
  \centering
  \includegraphics[width=\textwidth]{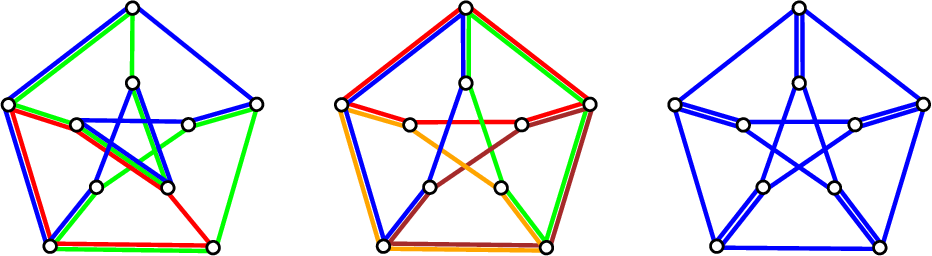}
  \caption{Coverings of the Petersen graph by disjoint unions of cycles.}
  \label{fig:petersen}
 \end{figure}

\begin{proposition}\label{prop:petersen}
For the Petersen graph, we have $3=c_g^{\mc{C}}(H)=c_{\ell}^{\mc{C}}(H)>c_{f}^{\mc{C}}(H)=2$.                                                                                                                                                                     
\end{proposition}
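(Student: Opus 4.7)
The plan is to establish the four inequalities
$c_g^{\mc{Cy}}(G)\le 3$,
$c_\ell^{\mc{Cy}}(G)\ge 3$,
$c_f^{\mc{Cy}}(G)\ge 2$, and
$c_f^{\mc{Cy}}(G)\le 2$;
the full chain $3 = c_g^{\mc{Cy}}(G) = c_\ell^{\mc{Cy}}(G) > c_f^{\mc{Cy}}(G) = 2$ then follows from Proposition~\ref{prop:basic}\ref{enum:inequalities}. The bound $c_f^{\mc{Cy}}(G)\ge 2$ is immediate: every graph in $\mc{Cy}$ has maximum degree~$2$, so any preimage of a vertex $v$ accounts for at most two of the three edges at $v$. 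For $c_g^{\mc{Cy}}(G)\le 3$ I would exhibit three $2$-factors of $G$ whose union is $E(G)$; since in a cubic graph a $2$-factor is exactly the complement of a perfect matching, this reduces to producing three perfect matchings of $G$ with empty common intersection, and the Petersen graph has exactly six perfect matchings with any two of them intersecting in exactly one edge (a quick direct check), so suitable triples are plentiful.

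For $c_f^{\mc{Cy}}(G)\le 2$ I would construct a single template $T = C_{20}$ together with an edge-surjective homomorphism $\varphi\colon T\to G$ satisfying $|\varphi^{-1}(v)| = 2$ for every $v\in V(G)$. The construction is guided by first fixing the spoke perfect matching $M$ of $G$, then forming the $4$-regular multigraph $G'$ obtained from $G$ by doubling each edge of $M$, and finally splitting every vertex $v$ of $G$ into two copies $v_1,v_2$ by sending one copy of the $M$-edge at $v$ plus one of the two non-$M$-edges at $v$ to $v_1$, and the other copy of the $M$-edge plus the remaining non-$M$-edge to $v_2$. A consistent routing (e.g.\ sending the ``forward'' outer edge $a_ia_{i+1}$ and the ``forward'' inner edge $b_ib_{i+2}$ to the index-$1$ copies) yields a simple $2$-regular graph on $20$ vertices; a direct traversal then shows that it is a single $20$-cycle, which supplies the required folded cover.

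The main obstacle is the lower bound $c_\ell^{\mc{Cy}}(G)\ge 3$. Assume for contradiction that there exists an injective cover $\varphi\colon T_1\dotcup\cdots\dotcup T_k\to G$ with each $T_i\in\mc{Cy}$ and $|\varphi^{-1}(v)|\le 2$ for every $v$. Since every preimage has degree exactly $2$ in its $T_i$ and $\deg_G(v)=3$, each vertex must have exactly two preimages, contributing four edge-incidences at $v$; hence exactly one edge at $v$ is doubly covered. The set $M$ of doubly covered edges is then a perfect matching of $G$, and at each $v\in V(T_i)$ the $M$-edge at $v$ belongs to $T_i$. Consequently, every cycle of every $T_i$ alternates between edges of $M$ and of $E(G)\setminus M$, and therefore has even length. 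Since $G$ has girth $5$ and no $10$-cycle, the only candidate lengths are $6$ and $8$. However, an $M$-alternating $6$-cycle $C$ would yield a perfect matching $M' = M\triangle C$ with $|M\triangle M'|=6$, contradicting the fact (used above) that any two perfect matchings of $G$ intersect in exactly one edge and hence differ in $8$ edges. Thus every cycle in every $T_i$ is an alternating $8$-cycle using exactly four $M$-edges, and counting the total $M$-edge multiplicity across all $T_i$'s --- each $M$-edge being doubly covered appears in exactly two of them --- gives $4c=2|M|=10$, where $c$ is the total number of cycles. This has no integer solution, which is the desired contradiction.
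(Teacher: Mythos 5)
Your proof is correct, but both of its nontrivial parts take routes that genuinely differ from the paper's. For the upper bounds the paper simply exhibits the covers in a figure, whereas you construct them: three $2$-factors arising as complements of three perfect matchings with empty common intersection for $c_g^{\mc{Cy}}(G)\le 3$, and the vertex-splitting of the $4$-regular multigraph obtained by doubling the spoke matching for $c_f^{\mc{Cy}}(G)\le 2$ (for the latter it already suffices that the split graph is simple and $2$-regular, hence a disjoint union of cycles; the claim that it is a single $20$-cycle is true but not needed). For the main lower bound $c_\ell^{\mc{Cy}}(G)\ge 3$ both arguments begin identically: the doubly covered edges form a perfect matching $M$, and every cycle in the cover is $M$-alternating, hence of even length $6$ or $8$. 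The paper then notes that the total edge multiplicity is $20$, so $6a+8b=20$ forces a $6$-cycle $C$, and derives the contradiction from the fact that $G\setminus V(C)$ is a claw, on which $M$ cannot restrict to a perfect matching. You instead exclude alternating $6$-cycles outright, via the fact that any two distinct perfect matchings of the Petersen graph meet in exactly one edge (so their symmetric difference has size $8$, never $6$), and then obtain the divisibility contradiction $4c=10$ by counting $M$-edges over the remaining alternating $8$-cycles. Each endgame rests on one Petersen-specific fact that must be verified by hand (the claw observation there, the pairwise intersection of perfect matchings here); your choice has the aesthetic advantage that the same matching fact also drives your construction for the global upper bound.
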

\begin{proof}
 All witnesses for the upper bounds are shown in Figure~\ref{fig:petersen}.
 Clearly, $c_{f}^{\mc{C}}(H)\geq 2$ since otherwise $H$ would have to be a disjoint union of cycles. Now suppose, $c_{\ell}^{\mc{C}}(H)=2$. Since $H$ is cubic, at each vertex there is exactly one edge contained in two cycles of the covering. Thus, these edges form a perfect matching $M$ of $H$. Moreover, all cycles involved in the cover are alternating cycles with respect to  $M$. In particular they are all even and of length $6$ or $8$ (as this graph is not Hamiltonian there is no $10$-cycle). Since $M$ is covered twice and the remaining edges of $H$ once, the sum of sizes of cycles in the cover is $20$, which can be obtained only as $6+6+8$. In particular, a $6$-cycle $C$ must be involved. Now $M$ restricted to $H\backslash V(C)$ is still a perfect matching, but $H\backslash V(C)$ is a claw.
\end{proof}

\section{Covering Classes}\label{sec:guest}
In this section we introduce the covering classes and covering numbers corresponding to the columns of Table~\ref{table:1}. We also include some known results and general observations.

\subsection{Forests and Pseudoforests}\label{subsec:pseudoforest}

 Nash-Williams~\cite{Nas-64} showed that the minimum number of forests needed to cover the edges of $H$ is $\max_{S\subseteq V(H)}\left\lceil\frac{|E[S]|}{|S|-1}\right\rceil$, where E[S] denotes the set of edges in the subgraph induced by S.. This value, denoted by $a(H)$, is now usually called the \emph{arboricity} of $H$, see Beineke~\cite{Bei-69} for an early appearance of this name. Clearly, $a(H)=c_g^{\mc{G}}(H)$, where $\mc{G}$ is the class of forests.

A \emph{pseudoforest} is a graph with at most one cycle per component and the \emph{pseudoarboricity} $p(H)$ is the minimum number of pseu\-do\-fo\-rests needed to cover the edges of $H$. Thus, $p(H)=c_g^{\mc{G}}(H)$, where $\mc{G}$ is the class of pseudoforests. 
Results of Picard and Queyranne~\cite{Pic-82} and Frank and Gy\'arf\'as~\cite{Fra-78} yield the following lemma.

\begin{lemma}[\cite{Fra-78,Pic-82}]\label{lem:pseudo}
 The pseudoarboricity $p(H)$ of a graph $H$ equals the minimum over all orientations of $H$ of the maximum out-degree of $H$. Furthermore, $p(H)=\max_{S\subseteq V(H)}\left\lceil\frac{|E[S]|}{|S|}\right\rceil$.
\end{lemma}

Using $a(H)=\max_{S\subseteq V(H)}\left\lceil\frac{|E[S]|}{|S|-1}\right\rceil$, one immediate consequence of Lemma~\ref{lem:pseudo} is $p(H) \leq a(H) \leq p(H)+1$. 

\begin{theorem}\label{thm:lf-(pseudo)arboricity}
For every graph, the values of global, local, and folded (pseudo)arboricity coincide.
\end{theorem}
\begin{proof}
 Take a folded covering $\varphi$ of $H$ with a (pseudo)forest, such that for every $v\in H$ we have $|\varphi^{-1}(v)|\leq c$. Since (pseudo)forests are closed under taking induced subgraphs, this in particular yields a covering for every induced subgraph $H[S]$ such that every vertex is covered at most $c$ times. Now, focusing on pseudoforests, we know that the subgraph of the covering graph induced by $\varphi^{-1}(S)$ has at most $c|S|$ edges, and therefore $c|S|\geq |E[S]|$, i.e., $c\geq \left\lceil\frac{|E[S]|}{|S|}\right\rceil$. Now by Lemma~\ref{lem:pseudo}, we have  $p(H)=\max_{S\subseteq V(H)}\left\lceil\frac{|E[S]|}{|S|}\right\rceil$ yielding the result for folded coverings. Now, Proposition~\ref{prop:basic}\ref{enum:inequalities} gives the result for the local covering number.
 
 Along the same lines one obtains $c\geq \left\lceil\frac{|E[S]|+1}{|S|}\right\rceil$ when $c$ is the number of times a vertex is covered in a forest-cover of $H$. It is then easy to compute $\left\lceil\frac{|E[S]|+1}{|S|}\right\rceil=\left\lceil\frac{|E[S]|}{|S|-1}\right\rceil$, since $|E[S]|\leq \binom{|S|}{2}$. The result follows as in the case of pseudoarboricity.
\end{proof}

\subsection{Star Forests}\label{subsec:star}
The star arboricity $\sa(H)$ of a graph $H$, introduced by Akiyama and Kano~\cite{Aki-85}, is the minimum number of star forests (forests without paths of length $3$) into which the edge-set of $H$ can be partitioned. In particular, if $\mc{S}$ denotes the class of star forests, then $\sa(H) = c_{g}^{\mc{S}}(H)$. The star arboricity has been a frequent subject of research. It is known that outerplanar and planar graphs have star arboricity at most $3$ and $5$, respectively; see Hakimi \textit{et al.}~\cite{Hak-96}. That this is best possible was shown by Algor and Alon~\cite{Alg-89}. 
Alon \textit{et al.}~\cite{Alo-92} showed that $sa(H)\leq 2a(H)$ is a tight upper bound. 

Since merging non-adjacent vertices in a star and omitting  double edges yields again a star, local and folded star arboricity coincide, by Proposition~\ref{prop:basic}~\ref{enum:local-eq-folded}. Here, we show that in contrast to the global star arboricity, the local star arboricity, denoted by $\lsa(H)$, fits nicely into the inequalities relating arboricity and pseudoarboricity from Section~\ref{subsec:pseudoforest}.

\begin{theorem}\label{thm:starorient}
 For any graph $H$, we have $p(H) \leq a(H) \leq \lsa(H) \leq p(H)+1$, where any inequality can be strict. Moreover, $\lsa(H) = p(H)$ if and only if $H$ has an orientation with maximum out-degree $p(H)$ in which this outdegree occurs only at vertices of degree $p(H)$.
\end{theorem}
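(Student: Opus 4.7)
The plan is to separate the three displayed inequalities and then the characterization. The leftmost inequality $p(G)\leq a(G)$ is immediate since every forest is a pseudoforest. For the upper bound $\lsa(G)\leq p(G)+1$ I would take an orientation $D$ of $G$ achieving $p(G)$ as maximum out-degree and pass to its reverse $\tilde D=D^{-1}$, which has maximum in-degree $\leq p(G)$. Covering $G$ by the single-star forests $\{S_v\}$, where $S_v$ is the star centered at $v$ with leaves $N^+_{\tilde D}(v)$, the number of stars meeting a vertex $v$ equals $[d^+_{\tilde D}(v)\geq 1]+d^-_{\tilde D}(v)\leq 1+p(G)$.

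For $a(G)\leq \lsa(G)$ I would begin with an optimal local star cover $F_1,\dots,F_k$ of parameter $j=\lsa(G)$ and orient every edge of every $F_i$ from the leaf to the center. In the combined orientation $D$ the out-degree of $v$ counts the forests in which $v$ is a leaf, so $d^+_D(v)\leq j$; moreover any $v$ attaining $d^+_D(v)=j$ must be a leaf in every forest containing it, hence never a center, which forces $d_G(v)=j$. Let $V_j:=\{v : d^+_D(v)=j\}$. Then $G-V_j$ inherits an orientation with maximum out-degree at most $j-1$, so $p(G-V_j)\leq j-1$ and $a(G-V_j)\leq j$ by the bound $a\leq p+1$. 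Putting each source $v\in V_j$ back and distributing its $j$ out-edges one per forest of $G-V_j$, each new edge only adds a new leaf in its forest, so no cycle is created and $a(G)\leq j$. Strictness of the chain is witnessed by the cycle $C_4$ (which gives $p=1<a=2$), the clique $K_4$ (which gives $a=2<\lsa=3$), and the star $K_{1,n}$ (which gives $\lsa=p=1<p+1=2$).

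For the characterization, $(\Rightarrow)$ is precisely the orientation constructed above applied to an optimal local cover of parameter $p(G)$: its maximum out-degree is at most $p(G)$, forced to equal $p(G)$ by the minimality of $p(G)$, and attained only at degree-$p(G)$ vertices by the leaf-in-every-forest argument. For $(\Leftarrow)$, given an orientation $D$ with maximum out-degree $p$ attained only at degree-$p$ vertices, I would again pass to $\tilde D$ and use the single-star cover $\{S_v\}$ it defines: at a source $v\in V_p$ of $D$ one has $d^+_{\tilde D}(v)=0$ and $d^-_{\tilde D}(v)=p$, giving count $p$; at any other vertex $d^+_D(v)\leq p-1$, so the count is $[d^-_D(v)\geq 1]+d^+_D(v)\leq p$. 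Together with the trivial lower bound $p(G)\leq \lsa(G)$ (the leaf-to-center orientation of any local cover has maximum out-degree $\leq \lsa(G)$), this yields $\lsa(G)=p(G)$.

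The step most likely to require care is the argument $a(G)\leq\lsa(G)$: the key is the observation that the leaf-to-center orientation of an optimal local cover has maximum out-degree attained only at low-degree vertices, which is exactly the property that lets a one-step source-removal reduce the problem to the standard $a\leq p+1$ bound.
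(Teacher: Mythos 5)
Your proof is correct and follows essentially the same route as the paper: the bijection between star covers and orientations (orient each edge from leaf to center, resp.\ take at each vertex the star of its in-edges) gives $p(G)\leq\lsa(G)\leq p(G)+1$ and the characterization, and $a(G)\leq\lsa(G)$ is obtained by deleting the vertices where out-degree $p(G)$ is attained, applying $a\leq p+1$ to the rest, and reinserting them one pendant edge per forest. (One step you use silently, as does the paper: the set $V_j$ is independent, since its vertices are never star centers, so every reinserted edge really does land in $G-V_j$; worth a sentence.) The one place you genuinely depart from the paper is in the strictness witnesses, and there your version is cleaner: the paper certifies $a(G)<\lsa(G)$ by invoking a bipartite tree-width construction from a later theorem (via $a\leq\tw$ and $i\geq\tw+1$), whereas your $K_4$ (where $a=p=2$ but no orientation attains out-degree $2$ only at degree-$2$ vertices, so $\lsa=3$) is self-contained and elementary; your $C_4$ and $K_{1,n}$ likewise replace the paper's $2k$-regular graphs and $K_{k,n}$ without loss.
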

\begin{proof}
 Every cover of $H$ with respect to stars can be transformed into an orientation of $H$ by orienting every edge towards the center of the corresponding star. If every vertex is contained in at most $\lsa(H)$ stars, then the orientation has maximum out-degree at most $\lsa(H)$. Lemma~\ref{lem:pseudo} then gives $p(H) \leq \lsa(H)$.

 In the same way, every orientation can be transferred into a cover with respect to stars by taking at every vertex the star of its incoming edges. If the orientation has maximum out-degree $p(H)$, then each vertex is contained in no more than $p(H)+1$ stars, i.e., $\lsa(H) \leq p(H)+1$. Moreover, the maximum out-degree is $\lsa(H)$ if and only if for every vertex $v$ lying in $\lsa(H)$ stars with centers different from $v$ there is no star with center $v$. Equivalently, $\lsa(H) = p(H)$ if and only if the maximum out-degree $p(H)$ is attained only at vertices of degree $p(H)$.

 If $sa_l(H)=p(H)+1$, then $a(H)\leq sa_l(H)$ follows from $a(H)\leq p(H)-1$.  When $sa_l(H)=p(H)$, there is an orientation with maximum out-degree $p(H)$ attained only at vertices with degree $p(H)$. Removing these vertices, we obtain a graph $H'$ with $p(H')\leq p(H)-1$, in particular $a(H')\leq p(H)$. We reinsert the vertices of degree $p(H)$ putting each incident edge into a different one of the $p(H)$ forests that partition $H'$. We obtain a cover of $H$ with $p(H)$ forests, so $a(H) \leq p(H) = \lsa(H)$. 

 Finally, we show that each inequality can be strict: First $k = p(H) < a(H)$ holds for every $2k$-regular graph $H$, due to the number of edges of the covering graphs. Second, we claim that $k = p(H) = \lsa(H)$ holds for the complete bipartite graph $K_{k,n}$ with $n$ large enough. Indeed, $p(K_{k,n})=\max_{S\subseteq V(K_{k,n})}\left\lceil\frac{|E[S]|}{|S|}\right\rceil = \left\lceil \frac{kn}{k+n}\right\rceil = k$, and taking all maximal stars with centers in the smaller class of the bipartition yields $\lsa(K_{k,n}) \leq k$.

 It remains to present a graph $H$ with $k = a(H) < \lsa(H)$. We take $H$ to be the $k$-dimensional grid of size $m$. That is, $V(H)=[m]^k$, and there is an edge joining vertices $v$ and $w$ if and only if they differ in exactly one coordinate and differ there by $1$. It is straightforward to compute that $H$ has $(m-1)m^{k-1}k$ edges. Observing that $H$ itself is a densest induced subgraph, the formulas for arboricity and pseudoarboricity give $a(H)=p(H)=k$ for large enough $m$. Also, $a(H)=\lsa(H)$ implies $p(H)=\lsa(H)$. Hence, as proved above, $H$ has an orientation with maximum out-degree $k$, which furthermore is only attained at vertices of degree $k$. However, $H$ has only $2^k$ vertices of degree $k$. If all other vertices have outdegree at most $k-1$, then $H$ has at most $2^kk+(m^k-2^k)(k-1)$ edges. Choosing $m>2^k+k$ yields a contradiction to the number of edges of $H$ calculated above.
\end{proof}

We will derive from Theorem~\ref{thm:starorient} tight upper bounds for the local star arboricity in Section~\ref{sec:results}, as well as a polynomial-time algorithm to compute the local star arboricity in Section~\ref{sec:complexity}.

\subsection{Other Covering Classes}
\subsubsection{Caterpillar Forests}\label{subsec:caterpillar}
A graph parameter related to the star arboricity is the \emph{caterpillar arboricity} $\ca(H)$ of $H$. A \emph{caterpillar} is a tree in which all non-leaf vertices form a path, called the \emph{spine}. The caterpillar arboricity is the minimum number of caterpillar forests into which the edge-set of $H$ can be partitioned. It has mainly been considered for outerplanar graphs (Kostochka and West~\cite{Kos-99}), and for planar graphs (Gon{\c{c}}alves and Ochem~\cite{Gon-07,Gon-09}).

\subsubsection{Interval Graphs}\label{subsec:interval}

The class $\mathcal{I}$ of \emph{interval graphs} has already been considered in many ways and remains present in today's literature. Interval graphs have been generalized to intersection graphs of systems of intervals by several groups of people: Gy{\'a}rf{\'a}s and West~\cite{Gya-95} proposed the $\mathcal{I}$-covering and introduced the corresponding global covering number called the \emph{track number}, denoted by $t(H)$, i.e., $t(H) = c_{g}^{\mathcal{I}}(H)$. It has been shown that outerplanar and planar graphs have track number at most $2$~\cite{Kos-99} and $4$~\cite{Gon-09}, respectively. Already in 1979, Harary and Trotter~\cite{Har-79} introduced the folded $\mathcal{I}$-covering number, called the \emph{interval number}, denoted by $i(H)$, i.e., $i(H) = c_{f}^{\mathcal{I}}(H)$. It is known that trees have interval number at most $2$~\cite{Har-79}. Also, outerplanar and planar graphs have interval number at most~$2$ and $3$, respectively, see Scheinermann and West~\cite{Sch-83}. All these bounds are tight.

The \emph{local track number} $t_{\ell}(H) := c_{\ell}^{\mathcal{I}}(H)$ is a natural variation of $i(H)$ and $t(H)$, which to our knowledge has not been considered so far.


\section{Results}\label{sec:results}
In this section we present all the new results displayed in Table~\ref{table:1}. We proceed input class by input class.

\subsection{Bounded Degeneracy}\label{subsec:degeneracy}
The \emph{degeneracy} $\dg(H)$ of a graph $H$ is the minimum of the maximum out-degree over all {acyclic} orientations of $H$. It 
is a classical measure for the sparsity of $H$. By Lemma~\ref{lem:pseudo} and the definition we have $p(H)\leq a(H)\leq \dg(H)$. Thus, the next corollary follows directly from Theorem~\ref{thm:starorient}.

\begin{corollary}\label{cor:lsa-deg-upper}
 For every $H$ we have $\lsa(H)\leq\dg(H)+1$.
\end{corollary}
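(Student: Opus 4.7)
The plan is to use Theorem~\ref{thm:starorient} directly, together with the well-known orientation-characterizations of the pseudoarboricity and the degeneracy.

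First I would recall that, by Picard and Queyranne~\cite{Pic-82}, $p(G)$ equals the minimum over all orientations of $G$ of the maximum out-degree, whereas $\dg(G)$ is the same minimum but restricted to \emph{acyclic} orientations. Since every acyclic orientation is in particular an orientation, this immediately yields $p(G)\leq \dg(G)$. Combining this with the upper bound $\lsa(G)\leq p(G)+1$ from Theorem~\ref{thm:starorient} gives $\lsa(G)\leq \dg(G)+1$, as claimed.

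For readers who prefer a self-contained argument, I would also spell out the explicit construction behind Theorem~\ref{thm:starorient} in this particular setting: fix any acyclic orientation $D$ of $G$ with maximum out-degree $\dg(G)$, and for each vertex $v\in V(G)$ let $S_v$ be the star whose center is $v$ and whose leaves are the in-neighbors of $v$ in $D$. Then $\{S_v : v\in V(G)\}$ is an injective cover of $G$ w.r.t. $\mc{S}$ (every edge $uv$ of $G$ sits in the star centered at its head in $D$), and a vertex $u$ appears in the cover at most once as a center and at most $\dg(G)$ times as a leaf (once for every out-neighbor), giving at most $\dg(G)+1$ stars containing $u$.

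There is no real obstacle: the statement is just the specialization of the inequality $\lsa(G)\leq p(G)+1$ to the subclass of acyclic orientations, and the bound $p(G)\le\dg(G)$ is immediate from the definitions. The main thing to be careful about is not to claim the tight version $\lsa(G)=p(G)$, which by Theorem~\ref{thm:starorient} requires the stronger hypothesis that the maximum out-degree is attained only at vertices of the corresponding degree.
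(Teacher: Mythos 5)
Your proof is correct and matches the paper's argument exactly: the paper derives the corollary directly from the definition of degeneracy (minimum maximum out-degree over acyclic orientations) together with the bound $\lsa(G)\leq p(G)+1$ of Theorem~\ref{thm:starorient}, which is precisely your first paragraph. The explicit star construction you add is a faithful unfolding of the same argument and is also correct.
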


Let $\mathcal{I}$ be the class of interval graphs and $\mc{Ca}$ be the class of caterpillar forests, i.e., the class of {bipartite} interval graphs. Since homomorphisms the image of a homomorphism has chromatic number at least as large as its preimage, the chromatic number of an interval graph $G$ that has a bipartite homomorphic image is at most two. Thus, $G$ is a caterpillar forest. Therefore, when $G$ is bipartite, the set of all homomorphic images of caterpillar forests in $G$ coincides with the set of all homomorphic images of interval graphs in $G$. Thus, by Proposition~\ref{prop:basic}~\ref{enum:subclass-guest} we have $c_{i}^{\mathcal{I}}(H) = c_{i}^{\mc{Ca}}(H)$ for $i\in\{g,\ell,f\}$ for every {bipartite} graph $H$. In particular, if $H$ is bipartite then $t(H) = \ca(H)$ and $i(H) = \fca(H)$. In 
the remainder of this section 
we present graphs with high (folded) caterpillar arboricity. Since all these graphs are bipartite, we obtain lower bounds on the track number and interval number of those graphs. Indeed in all constructions we define a supergraph $H$ of the complete bipartite graph $K_{m,n}$. The track number and interval number of $K_{m,n}$ have already been determined: $t(K_{m,n}) = \ca(K_{m,n}) = \left\lceil \frac{mn}{m+n-1}\right\rceil$~\cite{Gya-95} and $i(K_{m,n}) = \fca(K_{m,n}) = \left\lceil\frac{mn+1}{m+n}\right\rceil$~\cite{Har-79}. 

In order to formulate the following lemma, we need to introduce one more notion. For a cover $\varphi$ of $H$ by $G_1\dotcup\ldots\dotcup G_k$ with $G_i\in\mc{G}$ and a subgraph $H'$ of $H$, we define the \emph{restriction of $\varphi$ to $H'$} as a cover $\psi$ of $H'$ by $G'_1\dotcup\ldots\dotcup G'_k$, where $G'_i$ comes from $G_i$ by deleting $\{e\in E(G_i)\colon\ \varphi(e)\notin H'\}$ and then by removing isolated vertices. The resulting mapping $\psi$ is the restriction of $\varphi$ to $G'_1\dotcup\ldots\dotcup G'_k$. If $\mc{G}$ is closed under taking subgraphs, then $\psi$ is also a $\mc{G}$-cover. Note that while restriction of a function normally means its specification on a subset of the domain,  here we are restricting the image, which turn induces a restriction of the domain. 

To increase readability we refer to the classes of size $m$ and $n$ in the bipartition of $K_{m,n}$  by $A$ and $B$, respectively.

\begin{lemma}\label{lem:Kmn-caterpillar}
 Let $H$ be a graph with an induced $K_{m,n}$ and $\varphi$ be a $\mc{Ca}$-cover of $H$ with $s = \max\{ |\varphi^{-1}(a)| \colon\ a \in A\}$. If $\psi$ is the restriction of $\varphi$ to the subgraph $H'$ of $H$ after removing all edges in $K_{m,n}$, then there are at least $n - 2sm$ vertices $b \in B$ such that $|\psi^{-1}(b)| \leq |\varphi^{-1}(b)| - m$.
\end{lemma}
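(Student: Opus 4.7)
The plan is to upper-bound the ``bad'' set $B_{\mathrm{bad}}=\{b\in B:|\psi^{-1}(b)|>|\varphi^{-1}(b)|-m\}$ by $2sm$, so that its complement in $B$ furnishes the at least $n-2sm$ vertices claimed. I would first modify $\varphi$ so that every edge of $K_{m,n}$ is covered by exactly one template $T_i$ (permissible since $\mc{Ca}$ is closed under taking subgraphs), set $A_i=A\cap V(T_i)$ and $d_i(b)=|N_{T_i}(b)\cap A|$, and assume each $T_i$ has no isolated vertices. A direct inspection of which preimages of $b$ become isolated after removing $E(K_{m,n})$ then gives
\[
|\varphi^{-1}(b)|-|\psi^{-1}(b)|\;=\;f(b)\,,
\]
where $f(b)$ is the number of $T_i$ in which $b$ has a non-empty $T_i$-neighbourhood that lies entirely in $A$. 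Every such $T_i$ accounts for at least one of the $m$ edges from $b$ to $A$, so $f(b)\le m$, and $b$ is bad exactly when $f(b)<m$.

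Next I would establish the structural dichotomy. For every $i$ set
\[
B_i^{\mathrm{mix}}=\{b:b\text{ has both an $A$- and a non-$A$-neighbour in }T_i\},\quad H_i=\{b:d_i(b)\ge 2\text{ and }N_{T_i}(b)\subseteq A\},
\]
which are disjoint. If $b$ lies in none of the $B_i^{\mathrm{mix}}$ and in none of the $H_i$, then for every $T_i$ with $d_i(b)\ge 1$ the vertex $b$ is $A$-isolated and $d_i(b)=1$, whence $f(b)=\sum_i d_i(b)=m$. Contrapositively, every bad $b$ belongs to $B_i^{\mathrm{mix}}\cup H_i$ for at least one $i$, and hence
\[
|B_{\mathrm{bad}}|\;\le\;\sum_{i}\bigl(|B_i^{\mathrm{mix}}|+|H_i|\bigr).
\]

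The heart of the argument is the per-template bound $|B_i^{\mathrm{mix}}|+|H_i|\le 2|A_i|$. Every $b\in B_i^{\mathrm{mix}}\cup H_i$ has $T_i$-degree at least $2$, and therefore lies on the spine of the caterpillar forest $T_i$, since leaves of a caterpillar have degree $1$. Consequently every $K_{m,n}$-edge $(a,b)$ with $b\in B_i^{\mathrm{mix}}\cup H_i$ is either a spine-edge (both endpoints on the spine) or a leaf-edge whose $A$-endpoint $a$ is a leaf of $T_i$ attached to the spine vertex $b$. The spine being a path, each $a\in A_i^{\mathrm{spine}}$ is incident to at most two spine-edges, and each $A$-leaf has a unique parent, so these two types contribute at most $2|A_i^{\mathrm{spine}}|$ and $|A_i^{\mathrm{leaf}}|$ edges respectively. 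Using $d_i(b)\ge 1$ for $b\in B_i^{\mathrm{mix}}$ and $d_i(b)\ge 2$ for $b\in H_i$, this yields
\[
|B_i^{\mathrm{mix}}|+2|H_i|\;\le\;\sum_{b\in B_i^{\mathrm{mix}}\cup H_i} d_i(b)\;\le\;2|A_i^{\mathrm{spine}}|+|A_i^{\mathrm{leaf}}|\;\le\;2|A_i|\,,
\]
and dropping the extra $|H_i|$ on the left gives the per-template inequality.

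Summing over $i$ and using $\sum_i|A_i|=\sum_{a\in A}|\varphi^{-1}(a)|\le sm$ delivers $|B_{\mathrm{bad}}|\le 2sm$, as required. The main obstacle is this last per-template counting: one must charge the at-least-two $A$-neighbours demanded by each $H_i$-vertex \emph{together with} the at-least-one $A$-neighbour of each $B_i^{\mathrm{mix}}$-vertex against the same pool of spine- and leaf-$A$-vertices, and it is precisely the spine/leaf decomposition of caterpillars that makes this joint accounting close within the budget of $2|A_i|$.
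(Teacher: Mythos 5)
Your argument is correct and rests on the same structural fact as the paper's proof --- every vertex of a caterpillar has at most two neighbours on the spline --- but you organize the double count the other way around. The paper fixes $a\in A$, notes that its at most $s$ preimages have at most $2s$ spline neighbours in total, so all but at most $2s$ of the edges at $a$ are covered at a leaf-preimage of the corresponding $b$; a union bound over the $m$ vertices of $A$ leaves $n-2sm$ vertices $b$ all of whose $K_{m,n}$-edges are covered at distinct degree-one leaf-preimages of $b$, and these $m$ preimages are exactly the ones that vanish in $\psi$. You instead fix a template, classify the offending preimages of $B$-vertices (mixed/heavy, both forced onto the spline), and charge their edges to the at most $2|A_i|$ slots available there; summing over templates gives $2\sum_{a\in A}|\varphi^{-1}(a)|\le 2sm$, which is in principle slightly sharper, at the cost of a longer case analysis. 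One caveat: your notation ($A_i=A\cap V(T_i)$, $N_{T_i}(b)$, $d_i(b)$, and the identity $|\varphi^{-1}(b)|-|\psi^{-1}(b)|=f(b)$ with $f$ counting \emph{templates}) tacitly assumes each vertex has at most one preimage per template, i.e.\ that the cover is injective; the lemma is also invoked for folded covers (Theorems~\ref{thm:i-tw-lower} and~\ref{thm:fca-stw-lower}), so the classification and the counts should be stated per \emph{preimage} rather than per template --- the argument then goes through unchanged, since mixed/heavy preimages still have degree at least two and hence lie on a spline. Your opening normalization (each $K_{m,n}$-edge covered exactly once, no isolated template vertices) is legitimate because it leaves $\psi$ untouched and can only decrease $|\varphi^{-1}(v)|$ and $s$, though this transfer back to the original cover deserves a sentence.
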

\begin{proof}

 Every $a\in A$ is the image of at most $s$ vertices among $C_1\dotcup\ldots\dotcup C_k$. Denote by $s'$ the number of vertices in $\varphi^{-1}(a)$ that are incident to two spine-edges and by $s''$ the number of vertices in $\varphi^{-1}(a)$ that are leaves. Clearly, $s'+s''\leq s$. Moreover, at most $2s'+s''$ edges incident to $a$ are covered by spine-edges or edges whose degree $1$ vertex is mapped to $a$. Therefore, at least $n-2s$ edges at $a$ have to be covered under $\varphi$ by a non-spine edge with a vertex $b$ being the image of a leaf. Thus, for at least $n-2sm$ vertices $b \in B$ this is the case with respect to {every} $a \in A$.
 

 Now if $e=ab$ is covered by some edge in $C_i$ with $b$ being a leaf, then in the restriction of $\varphi$ to $H \setminus e$ the number of preimages of $b$ is one less than in $\varphi$. This concludes the proof.
\end{proof}

\begin{theorem}\label{thm:t-deg-lower}
 For $k\geq 1$ there is a bipartite graph $H$ such that
 \[2\dg(H)\leq 2k \leq \ca(H) = t(H).\]
\end{theorem}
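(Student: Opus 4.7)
The plan is to exhibit a bipartite graph $G$ with $\dg(G)\le k$ and $\ca(G)\ge 2k$, and then to deduce $\ca(G)=t(G)$ from bipartiteness.

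I would construct $G$ so that Lemma~\ref{lem:Kmn-caterpillar} can be applied in a nested way: take an ``outer'' induced $K_{k,n}$ with parts $A\cup B$, and attach to each $b\in B$ a private ``inner'' induced $K_{k,n}$ in which $b$ lies on the $B$-side. For $n$ sufficiently large (on the order of $k^3$) the quantitative estimates supplied by the lemma will be strong enough. The bound $\dg(G)\le k$ follows from the peeling order: first the inner $B$-vertices other than the shared $b$ (each of degree $k$), then the inner $A$-vertices (now of degree~$1$), then the outer $B$-vertices (now of degree~$k$, their inner $A$-neighbors having been peeled), and finally the outer $A$-vertices (isolated); every vertex is peeled at degree at most~$k$.

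For the lower bound, assume for contradiction that $\varphi$ is a cover of $G$ by $q\le 2k-1$ caterpillar forests. Applying Lemma~\ref{lem:Kmn-caterpillar} to the outer $K_{k,n}$ (using $s\le|\varphi|\le q$) yields at least $n-2qk$ outer vertices $b\in B$ satisfying $|\varphi^{-1}(b)|\ge k+|\psi^{-1}(b)|$, where $\psi$ is the restriction of $\varphi$ to $G$ with the outer $K$'s edges removed; this restricted graph is the disjoint union of the inner $K$'s together with an isolated outer $A$-side. A second application of Lemma~\ref{lem:Kmn-caterpillar} inside each inner $K_{k,n}$ yields at least $n-2qk$ vertices on that inner $B$-side satisfying $|\psi^{-1}(\cdot)|\ge k$. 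A counting argument over the many outer candidates then locates at least one $b$ which is simultaneously outer-good and inner-good within its own inner $K$, so that $|\varphi^{-1}(b)|\ge 2k>q$, a contradiction. Hence $\ca(G)\ge 2k$.

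Finally, $\ca(G)=t(G)$ follows from Proposition~\ref{prop:basic}\,\ref{enum:intersection-gl}, since for bipartite $G$ the classes $\mc{Ca}\cap\bar G$ and $\mathcal{I}\cap\bar G$ coincide (both consisting of the caterpillar forests).

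The main obstacle is the second application of the lemma: it exhibits only a large good set on the inner $B$-side without singling out any specific vertex, so the construction and the parameter $n$ must be calibrated carefully enough that the outer-good set and the various inner-good sets intersect in at least one outer vertex $b$. Balancing the $n$ candidates against the $2qk$ ``bad'' vertices produced in each inner application is where the bulk of the combinatorial work lies.
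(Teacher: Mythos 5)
The gap you flag at the end is not a technicality that careful calibration can repair: with your construction the lower bound is simply false. Lemma~\ref{lem:Kmn-caterpillar} only produces a large \emph{unspecified} good subset of the inner $B$-side, and since each inner $K_{k,n}$ is private to a single outer vertex $b$, the inner applications are independent of one another; a cover is free to put the one shared vertex $b$ into the (size at most $2qk$) bad set of \emph{every} inner application, so no counting over the $n$ outer candidates can locate a $b$ that is good in both applications. Concretely, your graph satisfies $\ca(G)\leq k+1$: cover the outer $K_{k,n}$ with $k$ caterpillar forests $F_1,\ldots,F_k$ (possible since $\ca(K_{k,n})=\lceil kn/(k+n-1)\rceil\leq k$), cover each inner $K_{k,n-1}$ on the private vertices by adding fresh components to the same $F_1,\ldots,F_k$, and cover the $k$ remaining edges from each $b$ to its private inner $A$-side by a star centered at $b$ placed in one additional forest $F_{k+1}$. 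This is an injective cover with $k+1$ forests, so for $k\geq 2$ your graph never reaches caterpillar arboricity $2k$. The vertex $b$ escapes precisely by being a spline (star-center) vertex in its inner graph, which is exactly the loophole the lemma's bad set represents.

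The paper's construction closes this loophole in two ways. First, the shared vertices sit on the \emph{small} side of the inner complete bipartite graphs: it attaches a $K_{k,(k-1)^2+1}$ to every $k$-subset $S$ of $B$, with $S$ as the $k$-element side. Second, instead of hoping a single vertex is good twice, it applies the lemma once (to the outer $K_{k,n}$) to get a large set $W\subseteq B$ whose vertices each have at most $k-1$ preimages left after deleting the outer edges, and then uses a pigeonhole argument over the $\binom{2k-1}{k-1}$ possible $(k-1)$-subsets of forests to find a $k$-subset $S\subseteq W$ all of whose remaining preimages lie in a common family of at most $k-1$ caterpillar forests. The restriction of the cover to $G[S\cup B_S]$ is then an injective cover of $K_{k,(k-1)^2+1}$ of size at most $k-1$, contradicting $\ca(K_{k,(k-1)^2+1})=k$. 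Your degeneracy argument and the deduction $\ca(G)=t(G)$ from bipartiteness via Proposition~\ref{prop:basic}~\ref{enum:intersection-gl} are fine, but the core combinatorial step needs the paper's subset-indexed gadgets and the pigeonhole over forest-subsets, not a nested application of the lemma at one vertex.
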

\begin{proof}
To construct $H$, begin with a copy of $K_{k,n}$ having $|A|=k$ and $|B|=n$ with $n>(k-1)\binom{2k-1}{k-1} + 2k(2k-1)$. For each $k$-subset $S$ of $B$, add $(k-1)^2+1$ new vertices $B_S$ with neighborhood $S$.  The resulting graph $H$ is bipartite with every vertex in $A$ and $B_S$ for any $S$ having degree $k$, so $\dg(H)=k$.


 Now consider an injective $\mc{Ca}$-cover $\varphi$ of $H$ and its restriction $\psi$ to the subgraph of $H$ after removing all edges in $K_{k,n}$. Assume for the sake of contradiction that the size $s$ of $\varphi$ is at most $2k-1$, i.e., $\max\{|\varphi^{-1}(v)| : v \in V(H)\} = s \leq 2k-1$. Then by Lemma~\ref{lem:Kmn-caterpillar}, there is a set $W \subset B$ of at least $n-2(2k-1)k > (k-1)\binom{2k-1}{k-1}$ vertices such that $|\psi^{-1}(b)| \leq |\varphi^{-1}(b)| - k \leq s - k \leq k-1$ for every $b \in W$. In other words, every $b \in W$ has a preimage under $\psi$ in at most $k-1$ of the $2k-1$ caterpillar forests. Since $|W| > (k-1)\binom{2k-1}{k-1}$, there is a $k$-set $S$ in $W$ whose preimages are contained in at most $k-1$ caterpillar forests.

 This implies that $\psi$ restricted to $H[S \cup B_S]$ is an injective $\mc{Ca}$-cover of $K_{k,(k-1)^2 +1}$ of size at most $k-1$, which is impossible since $\ca(K_{k,(k-1)^2 +1}) = \left\lceil \frac{k(k-1)^2+k}{k + (k-1)^2}\right\rceil = k$, due to~\cite{Bei-65}.
\end{proof}

\subsection{Bounded (Simple) Tree-width}\label{subsec:tree-width}
A \emph{$k$-tree} is a graph that can be constructed starting with a $(k+1)$-clique and in every step attaching a new vertex to a $k$-clique of the already constructed graph. We use the term \emph{stacking} for this kind of attaching. The \emph{tree-width} $\tw(H)$ of a graph $H$ is the minimum $k$ such that $H$ is a \emph{partial $k$-tree}, i.e., $H$ is a subgraph of some $k$-tree~\cite{Rob-85}.

We consider a variation of tree-width, called \emph{simple tree-width}. A simple $k$-tree is a $k$-tree with the extra requirement that there is a construction sequence in which no two vertices are stacked onto the same $k$-clique. 
Now, the \emph{simple tree-width} $\stw(H)$ of $H$ is the minimum $k$ such that $H$ is a partial simple $k$-tree, i.e., $H$ is a subgraph of some simple $k$-tree.

For a graph $H$ with $\stw(H) = k$ or $\tw(H) = k$ we fix any (simple) $k$-tree that is a supergraph of $H$ and denote it by $\tilde{H}$. Clearly, $H$ inherits a construction sequence from $\tilde{H}$, where some edges are omitted.

\begin{lemma}\label{lem:twstw}
 We have $\tw(H)\leq \stw(H)\leq \tw(H)+1$ for every graph $H$.
\end{lemma}
\begin{proof}
 The first inequality is clear. For the second inequality we show that every $k$-tree $H$ is a subgraph of a simple $(k+1)$-tree $H$. Whenever in the construction sequence of $H$ several vertices $\{v_1,\ldots, v_n\}$ are stacked onto the same $k$-clique $C$ we consider $C\cup\{v_1\}$ as a $(k+1)$-clique in the construction sequence for $H$. Stacking $v_i$ onto $C$ now can be interpreted as stacking $v_i$ onto $C\cup\{v_{i-1}\}$ and omitting the edge $v_{i-1}v_i$. In this way we can avoid multiple stackings onto $k$-cliques by considering $(k+1)$-cliques.
\end{proof}

Simple tree-width endows the notion of tree-width with a more topological flavor. For a graph $H$ we have the following: $\stw(H)\leq 1$ if and only if $H$ is a linear forest, $\stw(H)\leq 2$ if and only if $H$ is outerplanar, $\stw(H)\leq 3$ if and only if $H$ is planar and $\tw(H)\leq 3$~\cite{Elm-90}. 

%
%
%

Simple tree-width also has connections to discrete geometry. In~\cite{Bel-04} a \emph{stacked polytope} was defined to be a polytope that admits a triangulation whose dual graph is a tree. From that paper one easily deduces that a full-dimensional polytope $P\subset\mathbb{R}^d$ is stacked if and only if $\stw(G_P)\leq d$. Here $G_P$ denotes the $1$-skeleton of $P$. See~\cite{Kna-12,Hel-13,Hel-14} for more on simple tree-width.

We consider both graphs with bounded tree-width and graphs with bounded simple tree-width as input classes, since {(A)} most of the results for outerplanar graphs are implied by the corresponding result for $\stw(H) \leq 2$, {(B)} lower bound results for $\stw(H) \leq 3$ carry over to planar graphs, {(C)} the extremal results for these two input classes differ when the covering class is that of interval graphs, and {(D)} when the maximum covering numbers are the same for both classes, the lower bounds are slightly stronger when witnessed by graphs of low {simple} tree-width.

\begin{theorem}\label{thm:tl-stw-upper}
We have $t_{\ell}(H)\leq \stw(H)$ for every graph $H$.
\end{theorem}
\begin{proof}
 If $\stw(H) =1$, then $H$ is a linear forest and hence an interval graph. If $\stw(H) =2$, then $H$ is outerplanar, and it even has track number at most $2$ as shown in~\cite{Kos-99}.
 
 So let $\stw(H)= s\geq 3$. We build an injective cover $\varphi: I_1\dotcup\cdots\dotcup I_k \to H$ with $|\varphi^{-1}(v)| \leq s$ for every $v \in V(H)$ and $I_i \in \mathcal{I}$ for $i \in [k]$. We use as $I_1,\ldots,I_k$ only certain interval graphs, which we call \emph{slugs}: A slug is like a caterpillar with a fixed spine, except that the graph $I_i^v$ induced by the leaves at every spine vertex $v \in I_i$ is a linear forest. (In a caterpillar $I_i^v$ is an independent set for every spine vertex $v$.) The end vertices of the spine are called \emph{spine-ends} and vertices of degree at most~$1$ in $I_i^v$ are called \emph{leaf-ends}. See the left of Figure~\ref{fig:ltrack-stw-upper} for an example of a slug $I_i$ with the spine drawn thick, spine-ends in white, and leaf-ends in gray. Note that slugs are indeed interval graphs.
 
 \begin{figure}[htb]
  \centering
  \includegraphics{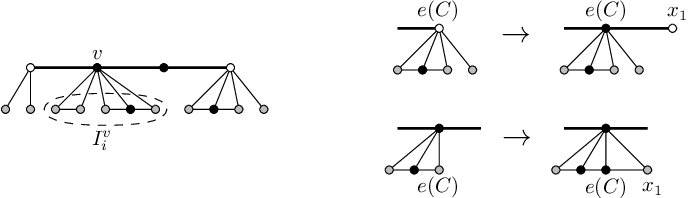}
  \caption{A slug and its extension.}
  \label{fig:ltrack-stw-upper}
 \end{figure}

 We define the cover $\varphi$ along a construction sequence of $H$ that is inherited from a simple $s$-tree $\tilde{H} \supseteq H$. At every step let $H'$ be the subgraph of $H$ that is already constructed and hence already covered by $\varphi$, and let $\tilde{H}'$ be the corresponding subgraph of $\tilde{H}$. We call an $s$-clique $C$ of $\tilde{H}'$ \emph{stackable} if no vertex has been stacked to $C$ so far. We maintain the following invariants on $\varphi$, which allow us to stack a new vertex onto every stackable $C$.
 
 \smallskip
  
 \noindent
 \textbf{Invariant.} At all times the following is satisfied for the current graph $H'$.
 \begin{enumerate}[label=\arabic*)]
  \item For every vertex $v$ in $H'$ there is a unique slug $I(v)$ with $I(v) \neq I(w)$ for $v\neq w$, and a spine vertex $s(v)$ of $I(v)$ in $\varphi^{-1}(v)$.\label{enum:slug}
  \item For every stackable $s$-clique $C$ there is a vertex $w_1 \in C$, a slug $I(C)$, and a spine-end or leaf-end $e(C)$ of $I(C)$ with $\varphi(e(C))= w_1$, such that:\label{enum:clique}
   \begin{enumerate}[label=2\alph*)]
    \item If $e(C)$ is a spine-end, then $I(C) \neq I(v)$ for all $v \in V(H')$.\label{enum:spine-end}
    \item If $e(C)$ is a leaf-end, then $I(C) = I(w_2)$ for some vertex $w_2 \in C\setminus\{w_1\}$, and the vertices $e(C)$ and $s(w_2)$ are adjacent in $I(C)$.\label{enum:leaf-end}
    \item Every leaf-end or spine-end $v$ is $e(C)$ for at most two cliques $C$ with equality only if $v$ has degree~$0$ or $1$ in the slug.\label{enum:at-most-2}
   \end{enumerate}
 \end{enumerate}
 
 It is not difficult to satisfy the above invariants for an initial $s$-clique of $\tilde{H}$. Indeed, this clique can be build up in a very similar way to the stacking procedure that we describe now: In the construction sequence of $H$ we are about to stack a vertex $w$ onto a stackable clique $C$ of the current graph $H'$. Let $ C= \{w_1,\ldots,w_s\}$. Without loss of generality we assume that $\varphi(e(C)) = w_1$ and that if $e(C)$ is a leaf-end, then $I(C) = I(w_2)$. We never change the preimages of vertices in $H$ under $\varphi$. In particular, all vertices we add to the existing or new slugs are mapped by $\varphi$ onto the new vertex $w$. We will denote these new vertices by $x_1,\ldots,x_s$ to emphasize that no more than $s$ such vertices are introduced. Note that for every $i \in [s]$, the clique $C_i$ in $\tilde{H}$ defined by $C_i = (C\setminus\{w_i\})\cup \{w\}$ is stackable in $H' \cup \{w\}$, and that all remaining stackable cliques in $H' \cup \{w\}$ are already stackable cliques in $H'$.
  
 For $i\in\{3,\ldots,s\}$ we do the following. If $ww_i \in E(H)$, then we introduce a new leaf $x_i$ to $I(w_i)$ at $s(w_i)$, and if $ww_i \notin E(H)$ we introduce a new slug consisting only of $x_i$. Either way, we set $e(C_{i-1}) = x_i$. Additionally we set $e(C_1) = x_s$. Note that~\ref{enum:leaf-end} is satisfied since $w_i,w \in C_{i-1}$ and $w_s,w \in C_1$.
 
 It remains to cover possible edges joining $w$ to $\{w_1,w_2\}$, to find a spine-end or leaf-end $e(C_s)$ for $C_s$, and to find a slug $I(w)$ for the new vertex $w$. In doing so we may still introduce two new vertices $x_1$ and $x_2$ to our slugs. We distinguish two cases, which are illustrated on the right in Figure~\ref{fig:ltrack-stw-upper}.
 
 \begin{description}
  \itemsep\smallskipamount
  \item[Case 1:] If $e(C)$ is a spine-end of $I(C)$, then we first proceed with $w_2$ similarly as with $w_i$ for $i\geq 3$ above. That is, we introduce a new leaf $x_2$ at $s(w_2)$ if $ww_2 \in E(H)$ and a new slug consisting only of $x_2$ if $ww_2 \notin E(H)$, and we set $e(C_s) = x_2$.
  
  \smallskip
  
  \begin{description}
   \item[Case 1.1:] If $ww_1 \in E(H)$, then we introduce a new spine vertex $x_1$ to $I(C)$ adjacent to $e(C)$. This covers the edge $ww_1$, since we assumed that $\varphi(e(C)) = w_1$. We set $I(w) = I(C)$, which satisfies condition~\ref{enum:slug} of the invariant since~\ref{enum:spine-end} implies $I(C) \neq I(v)$ for every vertex $v$ in $H'$.
   
   \item[Case 1.2:] If $ww_1 \notin E(H)$, then  we introduce a new slug $I$ consisting only of $x_1$ and set $I(w) = I$. 
  \end{description}

  \item[Case 2:] If $e(C)$ is a leaf-end of $I(C)$, then by assumption we have $I(C) = I(w_2)$.
  
  \smallskip
  
  \begin{description}
   \item[Case 2.1:] If $ww_2 \in E(H)$, then  we introduce a new leaf $x_2$ to $I(C)$ adjacent to $s(w_2)$ and a new slug $I$ consisting just of a new vertex $x_1$. If additionally $ww_1 \in E(H)$, then we also introduce an edge joining $x_2$ and $e(C)$ in $I(C)$. Again, since $\varphi(e(C)) = w_1$ and $\varphi(x_2) = w$, this covers the edge $ww_1$. Either way, we set $e(C_s) = x_2$ and $I(w) = I$.
   
   \item[Case 2.2:] If $ww_2 \notin E(H)$, then we introduce a new slug $I$ consisting only of a new vertex $x_2$ and set $I(w) = I$. When $ww_1 \in E(H)$ we add a new leaf $x_1$ to $s(w_1)$ in $I(w_1)$, and when $ww_1 \notin E(H)$, then we introduce a new slug consisting only of $x_1$. Either way we set $e(C_s) = x_1$.
  \end{description}
 \end{description}
 
 It is straightforward to check that we obtain a $\mathcal{I}$-cover of $H'\cup \{w\}$ and that the invariants above are satisfied. Note that since $\tilde{H}$ is a {simple} $s$-tree, the clique $C$ is no longer stackable and hence condition~\ref{enum:clique} of the invariant need not be satisfied in $H'\cup \{w\}$. Finally, every stackable clique in $H'$ different from $C$ was not affected by the above procedure, which completes the proof.
\end{proof}

We can prove three lower bounds for covering numbers. 

\begin{theorem}\label{thm:i-tw-lower}
For $k\geq 1$, there is a bipartite graph $H$ such that
\vspace{-0.8em}
\[
 \stw(H)\leq \tw(H)+1\leq k+1\leq \fca(H) = i(H).
\]
\end{theorem}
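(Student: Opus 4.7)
\emph{Proof plan.} Because $\stw(G)\leq\tw(G)+1$ always holds (Lemma~\ref{lem:twstw}) and because $\fca(G)=i(G)$ for every bipartite $G$ by Proposition~\ref{prop:basic}\ref{enum:intersection-f} (both $\mathcal{I}$ and $\mc{Ca}$ are closed under taking subgraphs, and every bipartite interval graph is triangle-free, i.e.\ a caterpillar forest), it suffices to exhibit a bipartite graph $G$ with $\tw(G)\leq k$ and $\fca(G)\geq k+1$. The guiding idea is to build $G$ as a pendant extension of the complete bipartite graph $K_{k,n}$ for $n$ just large enough that Lemma~\ref{lem:Kmn-caterpillar}, applied with $m=k$, leaves no budget for the pendants on the $B$-side.

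Concretely, set $n:=2k^2+1$ and let $G$ have vertex set $A\dotcup B\dotcup C$ with $A=\{a_1,\ldots,a_k\}$, $B=\{b_1,\ldots,b_n\}$, $C=\{c_1,\ldots,c_n\}$, and edge set $\{a_jb_i:j\in[k],i\in[n]\}\cup\{b_ic_i:i\in[n]\}$. Then $G$ is bipartite with parts $A\cup C$ and $B$, and $G[A\cup B]$ is an induced $K_{k,n}$. A valid tree decomposition of $G$ has a path of spline bags $P_i:=A\cup\{b_i\}$ for $i\in[n]$ with a leaf bag $Q_i:=\{b_i,c_i\}$ attached to $P_i$: every bag has size at most $k+1$, every edge of $G$ lies in some bag, and every vertex indexes a connected subtree. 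Hence $\tw(G)\leq k$, and therefore $\stw(G)\leq \tw(G)+1\leq k+1$ by Lemma~\ref{lem:twstw}.

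For the lower bound $\fca(G)\geq k+1$, suppose for contradiction that there is a folded caterpillar cover $\varphi$ of $G$ with $|\varphi^{-1}(v)|\leq k$ for every $v\in V(G)$. Let $s:=\max_{a\in A}|\varphi^{-1}(a)|\leq k$ and apply Lemma~\ref{lem:Kmn-caterpillar} to the induced $K_{k,n}$ with $m=k$: there is at least $n-2sm\geq n-2k^2=1$ vertex $b_i\in B$ with $|\psi^{-1}(b_i)|\leq |\varphi^{-1}(b_i)|-k\leq 0$, where $\psi$ denotes the restriction of $\varphi$ to $G':=G\setminus E(K_{k,n})$. But the edge $b_ic_i$ lies in $G'$ and hence must be covered by $\psi$, so $|\psi^{-1}(b_i)|\geq 1$, a contradiction. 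The main difficulty is reconciling the two opposing pressures: any graph with $\tw(G)\leq k$ has average density at most $k$, which precludes any simple edge-counting lower bound on $\fca$ above $k$; the pendant edges $b_ic_i$ resolve this because they cost almost nothing in tree-width, yet they make the $B$-budget of Lemma~\ref{lem:Kmn-caterpillar} exactly tight at some $b_i$, yielding the required contradiction.
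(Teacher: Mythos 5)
Your proof is correct and follows essentially the same route as the paper's: the identical construction ($K_{k,n}$ with $n=2k^2+1$ and a pendant neighbour attached to each vertex of the large class $B$), the same application of Lemma~\ref{lem:Kmn-caterpillar} producing a vertex $b\in B$ with $|\psi^{-1}(b)|\leq 0$, and the same contradiction from the pendant edge forcing $|\psi^{-1}(b)|\geq 1$. The only difference is that you make explicit the tree decomposition witnessing $\tw(G)\leq k$ and the identity $\fca(G)=i(G)$ for bipartite $G$, both of which the paper leaves as remarks.
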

\begin{proof}
 Construct $H$ from $K_{k,n}$ with $n=2k^2+1$ by adding a pendant vertex at each vertex of the larger partite set $B$. It is easy to see that $\tw(H)\leq k$, and then Lemma~\ref{lem:twstw} yields $\stw(H)\leq \tw(H)+1$. 

 Consider any $\mc{Ca}$-cover $\varphi$ of $H$ with $s = \max \{|\varphi^{-1}(v)| : v\in V(H)\}$ and its restriction $\psi$ to the subgraph $H'$ of $H$ obtained by removing all edges of $K_{k,n}$. By Lemma~\ref{lem:Kmn-caterpillar} there are at least $n - 2sk = 2k(k-s) + 1$ vertices $b \in B$ such that $|\psi^{-1}(b)| \leq |\varphi^{-1}(b)| - k$. Any such $b$ is incident to an edge in $H\setminus K_{k,n}$, which should be covered by $\psi$. Thus, $|\psi^{-1}(b)| \geq 1$. Hence, $s \geq |\varphi^{-1}(b)| \geq k+1$, so $\fca(H) \geq k+1$.
\end{proof}

\begin{theorem}\label{thm:t-stw-lower}
For $k\geq 3$, there is a bipartite graph $H$ such that
\vspace{-0.8em}
\[
 {\stw}(H)+1\leq k+1 \leq \ca(H) = t(H).
\]
\end{theorem}
\begin{proof}
 The construction of the graph $H$ starts with $H_0\cong K_{k-1,m_1}$, where $|B| = m_1 = 2(2k^2-2k+1)$. Let $B = \{u_1,...,u_{m_1/2}\}\cup \{v_1,...,v_{m_1/2}\}$.  For $i\in [m_1/2]$, add a copy $I_i$ of $K_{2,5k-5}$ with partite sets $\{u_i,v_i\}$ and $\{b_1^{i,j},...,b_{k-1}^{i,j}\colon\,j\in[5]\}$, calling the smaller set $A_i$ and the larger set $B_i$.  Next, let $m_2=(k-2)^+1$.  For $(i,j)\in[m_1/2]\times[5]$, add a set $B_{i,j}$ of $m_2$ new vertices and a copy $J_{i,j}$ of $K_{k-1,m_2}$ with partite sets ${b_1^{i,j},...,b_{k-1}^{i,j}}$ and $B_{i,j}$.  Note that the smaller part $A_{i,j}$ in $J_{i,j}$ is contained in $B_i$.
See Figure~\ref{fig:track-stw-low} for an illustration.

 \begin{figure}[htb]
  \centering
  \psfrag{G=}[rr]{$H:$}
  \psfrag{H1}{$I_1$}
  \psfrag{Hm-1}{$I_{\frac{m_1-2}{2}}$}
  \psfrag{Hm}{$I_{\frac{m_1}{2}}$}
  \psfrag{H=}{$I_i=$}
  \psfrag{k}[rr]{$k-1$}
  \psfrag{k-1}[cc]{$k-1$}
  \psfrag{m1}{$m_1$}
  \psfrag{m2}{$5(k-1)$}
  \psfrag{u}{$u_i$}
  \psfrag{v}{$v_i$}
  \psfrag{J1}{$J_{i1}$}
  \psfrag{J4}{$J_{i4}$}
  \psfrag{J5}{$J_{i5}$}
  \includegraphics[width=.9\textwidth]{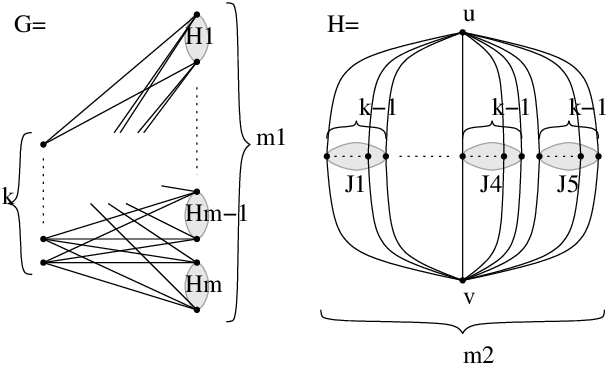}
  \caption{The graph $H$ and its induced subgraph $I_i$ and $J_j$.}
  \label{fig:track-stw-low}
 \end{figure}

 Assume for the sake of contradiction that $\varphi$ is an injective $\mc{Ca}$-cover of $H$ of size at most $k$. Consider the restriction $\psi$ of $\varphi$ to the subgraph $H' = H \setminus E(H_0)$ of $H$. By Lemma~\ref{lem:Kmn-caterpillar} there are at least $m_1 - 2k(k-1) = 2k^2-2k+2 > \frac{m_1}{2}$ vertices in $b \in B$ with $|\psi^{-1}(b)| \leq 1$. In particular there is some $i' \in [\frac{m_1}{2}]$ such that $|\psi^{-1}(u_i')|,|\psi^{-1}(v_i')| \leq 1$. That is, in the covering $u_i'$ and $v_i'$ each appear in only one caterpillar forest, which we call $C_{u_i'}$ containing $u_i$ and $C_{v_i'}$ containing $v_i'$. 
%
 Now consider the restriction $\phi$ of $\psi$ to the subgraph $H'' = H' \setminus E(I_{i'})$ of $H'$. Again by Lemma~\ref{lem:Kmn-caterpillar} there are at least $5(k-1)-4$ vertices $b\in B_{i'}$ with $|\phi^{-1}(b)| \leq k-2$. In particular there is some $j' \in [5]$ such that $|\phi^{-1}(b)| \leq k-2$ for all $b \in A_{i'j'}$.
 
 In other words, $\phi$ restricted to $H[A_{i'j'} \cup B_{i'j'}]$ is an injective $\mc{Ca}$-cover of $K_{k-1,(k-2)^2 +1}$ of size at most $k-2$, which is impossible, since $\ca(K_{k-1,(k-2)^2 +1}) = \left\lceil \frac{(k-1)(k-2)^2+k-1}{k-1 + (k-2)^2}\right\rceil = k-1$, due to~\cite{Bei-65}.

 It remains to show that $\stw(H)\leq k$. In order to describe the construction sequence for a simple $k$-tree containing $H$, we introduce some further vertex labels. Let $A_0 = \{a_1,\ldots,a_{k-1}\}$ be the smaller partite set of $H_0$, recall that $B_i = A_{i1}\cup\ldots\cup A_{i5}$ where $A_{ij}=\{b^{ij}_1,\ldots,b^{ij}_{k-1}\}$ for all $i\in[\frac{m_1}{2}],j\in[5]$, and let $B_{ij} = \{c^{ij}_1,\ldots,c^{ij}_{m_2}\}$ for $i \in [\frac{m_1}{2}],j\in[5]$. We construct a simple $k$-tree starting with a $(k+1)$-clique on $A \cup \{u_1,v_1\}$ via the following stackings:
 \begin{itemize}
  \item[A] Stack $u_i$ onto $A \cup \{v_{i-1}\}$ and $v_i$ onto $A \cup \{u_i\}$ \hfill $\forall i\in\{2,\ldots,\frac{m_1}{2}\}$
  \item[B] Stack $b^{i1}_\ell$ onto $\{a_1,\ldots,a_{k-\ell-1},u_i,v_i,b^{i1}_1,\dots,b^{i1}_{\ell-1} \}$ \hfill $\forall i \in [\frac{m_1}{2}],\ell \in [k-1]$
  \item[C] Stack $b^{ij}_\ell$ onto $\{u_i,v_i,b^{i(j-1)}_1,\dots,b^{i(j-1)}_{k-\ell-1},b^{ij}_1,\ldots,b^{ij}_{\ell-1} \}$ \\ \hspace*{0pt} \hfill $\forall i \in [\frac{m_1}{2}],\ell \in [k-1],j \geq 2$
  \item[D] Stack $c^{ij}_1$ onto $A_{ij} \cup \{u_i\}$ and $c^{ij}_\ell$ onto $A_{ij} \cup \{c^{ij}_{\ell-1}\}$ \hfill $\forall i \in [\frac{m_1}{2}],j\in[5],\ell \geq 2$
 \end{itemize}
 One can check that after step A. the entire graph $H_0$ is contained in the so-far constructed $k$-tree. Step B. deals with the complete bipartite graphs induced on $\{u_i,v_i\}\cup A_{i1}$ for all $i\in[\frac{m_1}{2}]$, step C. adds the remaining complete bipartite graphs induced on $\{u_i,v_i\}\cup A_{ij}$ for $j\geq 2$, such that afterwards all $I_{i}$ are contained. In step D. all edges and vertices necessary for the $J_{ij}$ are created. 
 Since no $k$-clique appears twice we conclude that $\stw(H) \leq k$.
\end{proof}

\begin{theorem}\label{thm:fca-stw-lower}
 For $k \geq 2$, there is a graph $H$ such that
 \vspace{-0.8em}
 \[
  \stw(H)+1 \leq k+1 \leq \fca(H).
 \]
\end{theorem}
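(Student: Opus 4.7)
My plan is to adapt the construction used in the proof of Theorem~\ref{thm:i-tw-lower} so as to reduce the simple tree-width by one while maintaining (or strengthening) the lower bound on the folded caterpillar arboricity. Concretely, I take $G$ to consist of an induced $K_{k-1,n}$ between classes $A=\{a_1,\ldots,a_{k-1}\}$ and $B=\{b_1,\ldots,b_n\}$ (so that by $\tw(K_{k-1,n}) = k-1$ and Lemma~\ref{lem:twstw} the simple tree-width of this base graph stays at most $k$), together with, for each $b \in B$, a private \emph{anchor gadget} $S_b$ attached at $b$. The gadget $S_b$ is chosen so that any restriction of a single folded caterpillar cover of $G$ to $S_b$ requires at least two distinct preimages of $b$; a natural candidate is a small partial simple $k$-tree (for instance, a triangulated fan with $b$ at a distinguished vertex and pendant triangles that force $b$ to appear on two different sides of the caterpillar bipartition).

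The first step is to establish $\stw(G) \leq k$ by giving an explicit stacking sequence of a simple $k$-tree supergraph $\tilde{G}\supseteq G$. Following the pattern of the proof of Theorem~\ref{thm:t-stw-lower}, the initial $(k+1)$-clique is $A \cup \{b_1,b_2\}$; subsequent $b_j$'s are stacked onto freshly created $k$-cliques of the form $A \cup \{b_{j-2}\}$; and then the vertices of each gadget $S_{b_j}$ are stacked onto fresh $k$-cliques containing $b_j$ (of which sufficiently many have just been created). The invariant that no $k$-clique is re-used is maintained step by step, so $\tilde{G}$ is a simple $k$-tree and $\stw(G) \leq k$.

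The second step is to show $\fca(G) \geq k+1$. Suppose towards a contradiction that $\varphi$ is a folded caterpillar cover of $G$ with $s := \max_{v\in V(G)} |\varphi^{-1}(v)| \leq k$, and write $\psi$ for the restriction of $\varphi$ to $G' := G \setminus E(K_{k-1,n})$, which is the disjoint union of the anchor gadgets together with the isolated vertices $A$. Applying Lemma~\ref{lem:Kmn-caterpillar} to the induced $K_{k-1,n}$ with $m=k-1$, at least $n - 2s(k-1)$ vertices $b \in B$ satisfy $|\psi^{-1}(b)| \leq |\varphi^{-1}(b)| - (k-1) \leq s-(k-1) \leq 1$. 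For $n$ chosen larger than $2k(k-1)$, at least one such $b$ exists; the restriction $\psi\rst{S_b}$ is then a folded caterpillar cover of $S_b$ in which $b$ has only one preimage, contradicting the defining property of the anchor gadget.

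The main obstacle is the design of the anchor gadget $S_b$ and the verification of its single-preimage-impossibility property. Since $\fca$ is monotone under subgraphs, the gadget cannot simply have large overall $\fca$; rather, it must have a structural property tying large preimage count specifically to the vertex $b$ (not to some other gadget vertex). I expect this to be achieved by a small construction featuring $b$ as the common vertex of several overlapping odd-cycle-like structures whose simultaneous resolution in one caterpillar component is impossible, analogous to the ``critical vertex'' arguments used in the interval-number and track-number literature. Balancing this with the requirement that attaching many such gadgets does not push $\stw(G)$ above $k$ (which is why the stacking in Step~1 must carefully reserve fresh $k$-cliques for each gadget) is the main technical challenge.
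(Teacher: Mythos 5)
Your overall frame (strip one row off the complete bipartite graph to save one unit of simple tree-width, then recover the lost unit in the lower bound via a local gadget) is a reasonable instinct, but the crux of your plan -- the anchor gadget $S_b$ -- cannot exist in the form you specify. You need a graph $S_b$ with a distinguished vertex $b$ such that \emph{every} cover of $S_b$ w.r.t.\ $\mc{Ca}$ gives $b$ at least two preimages. No such graph exists: for any graph $S$ and any single designated vertex $b$, take one template caterpillar to be a star whose center maps to $b$ and whose leaves map to the neighbors of $b$; this covers all edges incident to $b$ with exactly one preimage of $b$, and the rest of $S$ can be covered by further caterpillars avoiding $b$ entirely. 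A lower bound of $2$ on $|\psi^{-1}(b)|$ can therefore never be forced by a condition on $b$ alone; it can only emerge from \emph{simultaneous} preimage constraints on several vertices, which your construction does not provide (Lemma~\ref{lem:Kmn-caterpillar} applied to the single $K_{k-1,n}$ only constrains the $b$'s individually). This is precisely why the pendant-neighbor trick works in Theorem~\ref{thm:i-tw-lower} -- there the lemma already subtracts $k$, so the needed conclusion is merely $|\psi^{-1}(b)|\geq 1$ -- and why it cannot be upgraded to subtracting $k-1$ and demanding $|\psi^{-1}(b)|\geq 2$.

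The paper's proof resolves exactly this difficulty by making the ``gadget'' span many constrained vertices at once. It builds a path $c_1,\dots,c_n$ all joined to $\ell_1,\dots,\ell_{k-1}$, vertices $s_i$ joined to $\ell_1,\dots,\ell_{k-2}$ and to the consecutive pair $c_{i-1},c_i$ (creating triangles, so the graph is not bipartite), and pendant leaves $a_i$ on the $s_i$. Lemma~\ref{lem:Kmn-caterpillar} is applied three times, to three edge-disjoint complete bipartite subgraphs sharing the $\ell$'s, yielding a run of four consecutive $c$'s with at most one remaining preimage each and three interleaved $s$'s with at most two remaining preimages each; the induced $10$-vertex subgraph then admits no cover meeting all these bounds simultaneously. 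If you want to repair your proof you would have to replace your per-vertex gadget by such a multi-vertex structure whose auxiliary vertices are themselves constrained by further applications of the lemma -- at which point you have essentially reconstructed the paper's argument. (Your Step~1 stacking argument for $\stw(G)\leq k$ is fine as far as it goes, but it is moot until the lower-bound mechanism is fixed.)
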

\begin{proof}
 Fix $k\geq 2$. We construct $H$ starting with a star with $k-1$ leaves $\ell_1,\ldots,\ell_{k-1}$ and center $c_1$. In the simple partial $k$-tree containing $H$ this star is a $k$-clique. For $n = 16k^2-16k+4$ and $2\leq i \leq n$ stack a new vertex $c_i$ to $\ell_1,\ldots,\ell_{k-1},c_{i-1}$. Now stack vertices $s_2,\ldots, s_n$ to $\ell_1,\ldots,\ell_{k-2},c_{i-1}, c_i$. Finally introduce a pendant vertex $a_i$ as a neighbor of $s_i$, for each $i$. In the simple partial $k$-tree containing $H$, the vertex $a_i$ is stacked to the $k$-clique on $\ell_1,\ldots,\ell_{k-2},c_{i-1},s_i$. By construction $\stw(H)\leq k$. See Figure~\ref{fig:caf-stw-upper} for an illustration.

 \begin{figure}[htb]
  \centering
  \includegraphics[width=.7\textwidth]{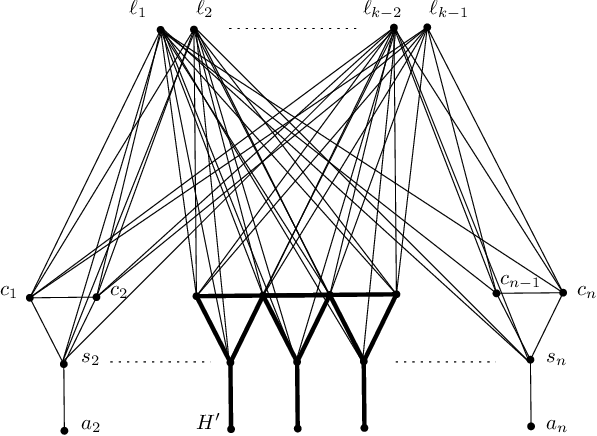}
  \caption{The graph $H$ and its subgraph $H'$.}
  \label{fig:caf-stw-upper}
 \end{figure}

 Assume for the sake of contradiction that $\fca(H)\leq k$. That is, there is a $\mc{Ca}$-cover $\varphi$ of $H$ with $|\varphi^{-1}(v)| \leq k$ for all $v\in V(H)$. We consider three edge-disjoint complete bipartite subgraphs $H_1, H_2, H_3$ of $H$ with partite sets $A_i$ and $B_i$ for $H_i$ defined as follows:
 \begin{itemize}
  \item $A_1=\ell_1,\ldots,\ell_{k-1}$ and $B_1=\{c_{2i}\colon\ 1\leq i\leq n/2\}$
  \item $A_2=\ell_1,\ldots,\ell_{k-1}$ and $B_2=\{c_{2i-1}\colon\ 1\leq i\leq n/2\}$
  \item $A_3=\ell_1,\ldots,\ell_{k-2}$ and $B_3=\{s_i\colon\ 2\leq i\leq n\}$
 \end{itemize}
 
 Note that $H_i$ and $H_j$ are edge-disjoint for $i \neq j$. Denote by $\psi$ the restriction of $\varphi$ to $H \setminus (E(H_1) \cup E(H_2) \cup E(H_3))$. We apply Lemma~\ref{lem:Kmn-caterpillar} three times, once for each $H_i$, but the bounds for restrictions of $\varphi$ to $H \setminus E(H_i)$ clearly also apply to $\psi$. Thus, we obtain sets $W_i\subset B_i$ (for each $i\in\{1,2,3\}$). For $i\in\{1,2\}$ we get $|W_i| \geq n/2-2k(k-1)$ and $\psi^{-1}(b)\leq k-(k-1)=1$ for $b\in W_i$. Furthermore we have $|W_3| \geq n-1-2k(k-2)$ and $\psi^{-1}(b)\leq k-(k-2)=2$ for $b\in W_3$. From the choice of $n$ it follows that there exist  $c_i,c_{i+1},c_{i+2},c_{i+3}\in W_1\cup W_2$ with consecutive indices such that $s_{i+1},s_{i+2},s_{i+3}\in W_3$. Together with the leaves $a_{i+1},a_{i+2},a_{i+3}$ these vertices induce a $10$-vertex graph $H'$ highlighted in Figure~\ref{fig:caf-stw-upper}. It is not difficult to check that there is no $\mc{Ca}$-cover $\psi$ of $H'$ with $|\psi^{-1}(c_{i+j})|\leq 1$ for $j\in\{0,1,2,3\}$ and $|\psi^{-1}(s_{i+j}
)|\leq 2$ for $j\in\{1,2,3\}$ --- a contradiction.
\end{proof}

\subsection{Planar and Outerplanar Graphs}\label{subsec:planar}
Determining maximum covering numbers of (bipartite) planar graphs and outerplanar graphs enjoys a certain popularity, as demonstrated by the variety of citations in Table~\ref{table:1}. We add three easy new results to the list.

%

\begin{corollary}\label{cor:loc-star-planar}
 The star arboricity of bipartite planar graphs is at most $4$.
 The local star arboricity of planar graphs and bipartite planar graphs is at most $4$ and at most $3$, respectively.
\end{corollary}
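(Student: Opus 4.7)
The plan is to derive this directly from Theorem~\ref{thm:starorient}, which yields the inequality $\lsa(G) \leq p(G)+1$ for every graph $G$, combined with the well-known formula $p(G)=\max_{S\subseteq V(G)}\lceil |E[S]|/|S|\rceil$ already recalled in Section~\ref{subsec:star}. So the task reduces to bounding the pseudoarboricity of (bipartite) planar graphs by means of Euler's formula.

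First I would treat the planar case. Since every subgraph of a planar graph is planar, Euler's formula gives $|E[S]|\leq 3|S|-6<3|S|$ whenever $|S|\geq 3$ (and trivially $|E[S]|\leq |S|$ for $|S|\leq 2$). Dividing by $|S|$ yields $|E[S]|/|S|<3$ for every nonempty $S$, so $\lceil |E[S]|/|S|\rceil\leq 3$ and hence $p(G)\leq 3$. Theorem~\ref{thm:starorient} then gives $\lsa(G)\leq p(G)+1\leq 4$, as claimed.

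The bipartite planar case is entirely analogous: every subgraph is again bipartite and planar, so the Euler bound sharpens to $|E[S]|\leq 2|S|-4<2|S|$ for $|S|\geq 3$, whence $\lceil |E[S]|/|S|\rceil\leq 2$ and $p(G)\leq 2$. Invoking Theorem~\ref{thm:starorient} once more yields $\lsa(G)\leq 3$.

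There is essentially no obstacle here; the corollary is really a one-line consequence of Theorem~\ref{thm:starorient} once the Nash-Williams/Picard--Queyranne formula for $p(G)$ is applied to the Euler-type edge-count bound. Note that this route also avoids having to appeal to the segment-contact representation used in Theorem~\ref{thm:star-bip-planar}; the bipartite bound is extracted from the pseudoarboricity estimate alone.
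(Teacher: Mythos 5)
Your proof is correct and follows essentially the same route as the paper: both derive the bound from Theorem~\ref{thm:starorient}'s inequality $\lsa(G)\leq p(G)+1$ together with an Euler-formula edge count. The only cosmetic difference is that you bound $p(G)$ directly via the density formula $\max_{S}\lceil|E[S]|/|S|\rceil$, while the paper bounds the arboricity $a(G)$ and uses $p(G)\leq a(G)$; the resulting numbers are identical.
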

\begin{proof}
 As mentioned in Section~\ref{subsec:star}, the arboricity $a(H)$ of every graph $H$ can be expressed as  $\max_{S\subseteq V(H)}\left\lceil\frac{|E[S]|}{|S|-1}\right\rceil$~\cite{Nas-64}. By Euler's Formula every planar graph has at most $3V(H)-6$ edges and every bipartite planar graph has at most $2V(H)-4$ edges and clearly both classes are closed under taking subgraphs. Together it follows that every planar graph has arboricity at most~$3$ and every planar bipartite graph has arboricity at most~$2$. With this, the statement about global star arboricity follows since we have $\sa(H)\leq 2a(H)$ by~\cite{Alo-92}. The statements about local arboricity follow since we have $\lsa(H)\leq a(H)+1$ by Theorem~\ref{thm:starorient}.
\end{proof}

The only question mark in Table~\ref{table:1} concerns the local track number of planar graphs. Scheinerman and West~\cite{Sch-83} show that the interval number of planar graphs is at most~$3$, but this is verified with a cover that is not injective. On the other hand, there are bipartite planar graphs with track number~$4$~\cite{Gon-09}. However by Corollary~\ref{cor:loc-star-planar} and Theorem~\ref{thm:tl-stw-upper} every bipartite planar graph and every planar graph of tree-width at most~$3$ has local track number at most~$3$. We believe that there are planar graphs with local track number $4$, but the following remains open:

\begin{quest}\label{quest:localtrackplanar}
 What is the maximum local track number of a planar graphs?
\end{quest}

\section{Separability and Complexity}\label{sec:complexity}

This section is devoted to different types of questions. First, we investigate how much global, local, and folded covering numbers can differ with respect to the same covering and input class. Second, we look at the complexity of computing these parameters.

In Table~\ref{table:1} we provide several pairs of an input class $\mc{H}$ and a covering class $\mc{G}$ for which the global covering number and the local covering number differ, i.e., $c_{g}^{\mc{G}}(\mc{H}) > c_{\ell}^{\mc{G}}(\mc{H})$. Indeed this difference can be arbitrarily large.

\begin{theorem}\label{thm:separate-gl}
 For the covering class $\mc{Q}$ of collections of cliques and the input class $\mc{H}$ of line graphs, we have $c_{g}^{\mc{Q}}(\mc{H}) = \infty$ and $c_{\ell}^{\mc{Q}}(\mc{H}) \leq 2$.
\end{theorem}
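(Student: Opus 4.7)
The plan is to establish the two claims separately, with the upper bound given by a direct construction and the lower bound by exhibiting an explicit family of line graphs with unbounded global clique cover number.

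For the upper bound $c_{\ell}^{\mc{Cl}}(\mc{G}) \leq 2$, I would take an arbitrary graph $H$ and cover $L(H)$ by the family of stars $\{K_v : v \in V(H)\}$, where $K_v \subseteq E(H)$ is the set of edges of $H$ incident to $v$. Since the edges incident to a common vertex are pairwise adjacent in $H$, each $K_v$ induces a clique in $L(H)$. Every edge of $L(H)$ is, by definition, a pair of edges of $H$ sharing a vertex $v$, and is therefore covered by $K_v$. Finally, an arbitrary vertex $e = \{u,w\}$ of $L(H)$ lies in exactly the two stars $K_u$ and $K_w$, so the resulting injective cover has $|\varphi^{-1}(e)| \leq 2$ for every vertex $e$ of $L(H)$.

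For the lower bound $c_{g}^{\mc{Cl}}(\mc{G}) = \infty$, I would take $H = K_{n,n}$ and show $c_g^{\mc{Cl}}(L(K_{n,n})) \geq 2n$. The key structural fact is that since $K_{n,n}$ is triangle-free, the only maximal cliques in $L(K_{n,n})$ are the stars of $K_{n,n}$; hence every clique in $L(K_{n,n})$ is a set of edges of $K_{n,n}$ sharing a common vertex. Moreover, any such clique of size $\geq 2$ uniquely determines that common vertex, because two distinct edges of $K_{n,n}$ share at most one endpoint. Now consider any cover $\varphi$ of $L(K_{n,n})$ with cliques $T_1, \ldots, T_k$, and for each vertex $v \in V(K_{n,n})$ pick any two edges $e, e'$ of $K_{n,n}$ incident to $v$ (possible since $n \geq 2$). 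The pair $\{e,e'\}$ is an edge of $L(K_{n,n})$ and must lie in some $T_i$; then $T_i$ is a clique of size $\geq 2$ whose associated common vertex is $v$. Since each $T_i$ of size $\geq 2$ is associated with a unique vertex of $K_{n,n}$, the map $v \mapsto T_i$ is injective, so $k \geq 2n$.

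Since $n$ is arbitrary, $c_g^{\mc{Cl}}(\mc{G}) = \infty$, completing the proof. I do not foresee a real obstacle: the triangle-freeness of $K_{n,n}$ makes the clique structure of the line graph completely transparent (only sub-stars), and the upper bound argument is essentially just the standard incidence description of a line graph. The only subtlety worth stating carefully is that a clique in $L(K_{n,n})$ of size at least two is attached to a \emph{unique} vertex of $K_{n,n}$, which is what ensures the lower bound $2n$ rather than merely $n$.
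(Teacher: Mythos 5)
Your argument for $c_{\ell}^{\mc{Cl}}(\mc{G}) \leq 2$ is correct: covering $L(H)$ by the vertex-stars $K_v$, $v \in V(H)$, places every vertex of $L(H)$ in exactly two cliques. This is the standard Krausz-type decomposition of a line graph; the paper obtains the same bound by citing Whitney's characterization.

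The lower bound, however, has a genuine gap, and it cannot be repaired within your chosen example. The template class $\mc{Cl}$ consists of \emph{collections} (disjoint unions) of cliques --- the paper stipulates that all template classes are closed under disjoint union --- so the size of a global cover counts the number of such collections, not the number of individual cliques. Your injection $v \mapsto T_i$ therefore fails: a single template graph $T_i$ may contain many pairwise vertex-disjoint cliques and thus be ``associated'' with many vertices of $K_{n,n}$ simultaneously. In fact $c_{g}^{\mc{Cl}}(L(K_{n,n})) = 2$: two stars $K_u$, $K_{u'}$ are vertex-disjoint in $L(K_{n,n})$ whenever $u,u'$ lie in the same bipartition class, so the single collection $\bigsqcup_{a \in A} K_a$ is a legitimate template graph covering all edges of $L(K_{n,n})$ that come from a shared $A$-vertex, and $\bigsqcup_{b \in B} K_b$ covers the rest. (What you actually bound from below is the number of individual cliques needed, which is a different parameter.) This is precisely why the paper works with $L(K_n)$ instead: in $K_n$ any two vertex-stars share an edge, so the star-type cliques inside one collection must form a star \emph{forest} of $K_n$; after discarding at most a third of the vertices per collection to eliminate triangle-type cliques, a cover of size $k$ yields a star-forest cover of some $K_m$ with $m \geq (2/3)^k n$, and $\sa(K_m) \geq \frac{m-1}{2}$ then forces $k \in \Omega(\log n)$. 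To fix your proof you would need a host whose vertex-stars pairwise intersect (or some other mechanism preventing one collection from absorbing many stars), which leads you back to something like the paper's argument.
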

\begin{proof}
 By a result of Whitney~\cite{Whi-32} a graph $H$ is a line graph if and only if $c_{\ell}^{\mc{Q}}(H) \leq 2$.
 
 To prove $c_{g}^{\mc{Q}}(\mc{H}) = \infty$, we claim that $c_{g}^{\mc{Q}}(L(K_n)) \in \Omega(\log n)$, i.e., the covering number of the line graph of the complete graph on $n$ vertices is unbounded as $n$ goes to infinity. Assume that $L(K_n)$ is covered by $k$ collections of cliques $C_1,\ldots,C_k$. Every clique in $L(K_n)$ corresponds to either a triangle or a star in $K_n$. Now, every $C_i$ in $L(K_n)$ corresponds to a vertex disjoint collection of triangles and stars in $K_n$. Together these collections cover the edges of $K_n$. We will restrict the covering of $L(K_n)$ to a covering of $L(K_m)$ with collections of cliques all of whose cliques correspond to stars in $K_m$. In the first step delete at most $\frac{1}{3}n$ vertices of $K_n$ such that in the restricted cover of the smaller line graph no clique in $C_1$ corresponds to a triangle. Repeating this for every $C_i$, we end up with a clique cover of $L(K_m)$ with $m \geq (\frac{2}{3})^k n$ that corresponds to a cover of $K_m$ with star forests. 
Since by~\cite{Aki-85} the star arboricity of $K_m$ is $\left\lceil\frac{m}{2}\right\rceil+1$, we get $k \geq \frac{m+2}{2} > (\frac{2}{3})^{k-1} n$, and thus $k \in \Omega(\log n)$.
\end{proof}

\begin{remark}
 Milans, Stolee, and West~\cite{Mil-12} proved a similar result with interval graphs as covering class, i.e., they showed that the growth rate of $t(L(K_n))$ is between $\Omega(\log\log n/\log\log\log n)$ and $O(\log\log n)$, while $i(H) \leq 2$ for every line graph $H$.
\end{remark}

A case of particular interest to us is the input class of {claw-free graphs} -- a class containing line graphs. It has been shown that this class has unbounded local clique covering number~\cite{Jav-12}. We conjecture the following stronger statement:

\begin{conjecture}
 The class of claw-free graphs has unbounded interval number.
\end{conjecture}

What can be said about local and folded covering number? Table~\ref{table:1} suggests that the separation of the local and the folded covering number is more difficult. Indeed we have $c_{\ell}^{\mc{G}}(\mc{H}) = c_{f}^{\mc{G}}(\mc{H})$ for every $\mc{G}$ and $\mc{H}$ in Table~\ref{table:1}, except for the local track number of planar graphs, (c.f. Question~\ref{quest:localtrackplanar}). However, proving upper bounds for $c_{\ell}^{\mc{G}}(\mc{H})$ can be significantly more elaborate than for $c_{f}^{\mc{G}}(\mc{H})$, even if we suspect that both values are equal; see for example Conjecture~\ref{conj:LLAC} and Theorem~\ref{thm:FLAC}.

Observing that there is {no} injective cover of a path by cycles of length at least $3$ and that every path is the homomorphic image of a cycle one gets:

\begin{obs}\label{obs:separate-lf}
 For the covering class $\mc{C}$ of collections of cycles of length at least $3$ and the input class $\mc{H}$ of paths, we have $c_{\ell}^{\mc{C}}(\mc{H}) = \infty$ and $c_{f}^{\mc{C}}(\mc{H}) \leq 2$.
\end{obs}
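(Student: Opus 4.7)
The plan is to dispose of the two claims independently, both via direct arguments that lean on the definition of cover.

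For the infinity statement I would argue as follows. An injective cover of a host graph $G$ w.r.t.\ $\mc{Cy}$ is a homomorphism $\varphi\colon C_1\dotcup\cdots\dotcup C_k \to G$ whose restriction to each cycle $C_i$ is injective; in particular, $\varphi(C_i)$ is a subgraph of $G$ isomorphic to $C_i$, i.e.\ a cycle. Since a path contains no cycle as a subgraph, any nontrivial path $P$ admits no injective cover w.r.t.\ $\mc{Cy}$ that covers a single edge, let alone all of them. Therefore $c_{\ell}^{\mc{Cy}}(P)$ is undefined (conventionally $\infty$) for every path with at least one edge, which gives $c_{\ell}^{\mc{Cy}}(\mc{G})=\infty$.

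For the upper bound $c_{f}^{\mc{Cy}}(\mc{G})\le 2$ I would exhibit, for every path $P_n=v_1v_2\cdots v_n$, one explicit cycle of size $1$ that folds onto $P_n$. The natural candidate is the back-and-forth walk: take a cycle $C$ on $2(n-1)$ vertices $u_1,\ldots,u_{2n-2}$ (when $n\ge 3$) and define $\varphi(u_i)=v_i$ for $1\le i\le n$ and $\varphi(u_{n+j})=v_{n-j}$ for $1\le j\le n-2$. A short check shows that consecutive vertices of $C$ map to adjacent vertices of $P_n$, so $\varphi$ is a homomorphism, and every edge $v_iv_{i+1}$ of $P_n$ is hit by the forward sweep (and, for interior edges, also the backward sweep), so $\varphi$ is edge-surjective. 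Each internal vertex $v_k$ ($2\le k\le n-1$) has preimage $\{u_k,u_{2n-k}\}$ of size $2$, while $v_1$ and $v_n$ have a single preimage. The tiny cases $n\le 2$ are handled by the empty cover (no edges) and by $C_4\to P_2$ with vertices alternating between the two endpoints of $P_2$.

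There is no real obstacle here; the only care needed is in the small cases and in confirming that the cycle length $2(n-1)$ is at least $3$ when $n\ge 3$ so that $C$ is a legitimate simple cycle. Combining both halves yields the claimed separation between the local and folded covering numbers.
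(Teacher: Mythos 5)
Your proposal is correct and matches the paper's (essentially unstated) reasoning: the paper justifies $c_{\ell}^{\mc{Cy}}(\mc{G})=\infty$ by exactly your remark that a path admits no injective cover by cycles, and the folded bound is the standard back-and-forth doubling of the path into a closed walk, which is precisely your cycle of length $2(n-1)$ with two preimages at each internal vertex. Your handling of the small cases and of the requirement that the cycle have length at least $3$ is careful and adds nothing beyond what the paper implicitly assumes.
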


Observation~\ref{obs:separate-lf} may be considered pathological. However, the local and folded covering number may differ also when $c_{\ell}^{\mc{G}}(H) < \infty$. We gave one example for this when considering coverings of the Petersen graph with disjoint unions of cycles, see Proposition~\ref{prop:petersen}. Here is another example: It is known that $i(K_{m,n}) = \left\lceil \frac{mn+1}{m+n} \right\rceil$~\cite{Har-79} and $t(K_{m,n}) = \left\lceil \frac{mn}{m+n-1} \right\rceil$~\cite{Gya-95}. The lower bound on $t(K_{m,n})$ presented in~\cite{Deo-94} indeed gives $t_{\ell}(K_{m,n}) \geq \left\lceil \frac{mn}{m+n-1} \right\rceil$ and hence we have $t_{\ell}(K_{m,n}) > i(K_{m,n})$ for appropriate numbers $m$ and $n$, such as $n = m^2-2m+2$. With Proposition~\ref{prop:basic} this translates into $\lca(K_{m,n}) > \fca(K_{m,n})$. Apart from these examples, we have no general answer to the following question.

\begin{quest}
 By how much can folded and local covering number differ?
\end{quest}

\medskip

Another interesting aspect of covering numbers concerns the computational complexity of determining them. Very informally, one might suspect that the computation of $c_{f}^{\mc{G}}(H)$ is easier than of $c_{\ell}^{\mc{G}}(H)$, which in turn is easier than computing $c_{g}^{\mc{G}}(H)$. For example, if $\mc{M}$ is the class of all matchings, then $c_{g}^{\mc{M}}(H) = \chi'(H)$, the edge-chromatic number of $H$. Hence deciding $c_{g}^{\mc{M}}(H) \leq 3$ is NP-complete even for $3$-regular graphs~\cite{Hol-81}. On the other hand $c_{\ell}^{\mc{M}}(H)$ equals the maximum degree of $H$ and can therefore be determined very efficiently. As a second example, more elaborate, consider the star arboricity $\sa(H)$ and the caterpillar arboricity $\ca(H)$. Deciding $\sa(H) \leq k$~\cite{Hak-96,Gon-09} and deciding $\ca(H) \leq k$~\cite{Gon-09,She-96} are NP-complete for $k=2,3$. The complexity for $k\geq 4$ is unknown in both cases. To the best of our knowledge, the complexity of determining the local and folded caterpillar 
arboricity of a graph is also open.
 On the other hand, from Theorem~\ref{thm:starorient} we can derive the following.

\begin{theorem}\label{thm:lsa-in-P}
 The local star arboricity can be computed in polynomial-time.
\end{theorem}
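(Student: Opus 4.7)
The plan is to convert Theorem~\ref{thm:starorient} into an algorithm: since we already know $\lsa(G) \in \{p(G), p(G)+1\}$, it suffices to compute $p(G)$ and then decide which of the two values is attained.

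First, I would compute the pseudoarboricity $p(G)$ in polynomial time. By the characterization $p(G) = \max_{S \subseteq V} \lceil |E[S]|/|S| \rceil$ (equivalently, the minimum maximum out-degree over all orientations, as noted before Theorem~\ref{thm:starorient}), this reduces to testing for each candidate value $k$ whether $G$ admits an orientation with maximum out-degree at most $k$. The latter is a standard max-flow problem: build a bipartite network with source $s$, one node per edge $e \in E(G)$ with capacity $1$ from $s$, capacities $1$ from each edge-node to its two endpoints, and capacity $k$ from every vertex to the sink $t$; feasibility corresponds to a flow of value $|E(G)|$. A binary search over $k \in \{0, 1, \ldots, |E(G)|\}$ then yields $p(G)$ in polynomial time.

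Second, by the equivalence in Theorem~\ref{thm:starorient}, $\lsa(G) = p(G)$ iff $G$ admits an orientation with maximum out-degree $p(G)$ attained only at vertices of degree $p(G)$. Equivalently, setting
\[
 b(v) := \begin{cases} p(G)-1 & \text{if } \deg(v) > p(G), \\ p(G) & \text{otherwise}, \end{cases}
\]
we ask whether $G$ has an orientation with $\mathrm{outdeg}(v) \leq b(v)$ for every $v \in V(G)$. This is a vertex-bounded orientation problem, decidable in polynomial time by the same max-flow construction as above, with capacity $b(v)$ on each vertex-to-sink arc. If the resulting flow saturates the edges then $\lsa(G) = p(G)$; otherwise $\lsa(G) = p(G)+1$.

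Since both steps consist of a polynomial number of max-flow computations on a network of size $O(|V(G)| + |E(G)|)$, the overall procedure runs in polynomial time. No step looks like a real obstacle here: the only point that requires care is the translation of the condition ``max out-degree attained only at vertices of degree $p(G)$'' into the uniform vertex capacities $b(v)$, and checking that this encoding is indeed correct (in particular that vertices with $\deg(v) < p(G)$ trivially satisfy any bound $\leq p(G)$, so assigning $b(v) = p(G)$ to them loses nothing).
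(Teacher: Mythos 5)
Your proposal is correct and follows essentially the same route as the paper: compute $p(G)$ via a degree-constrained orientation (flow) algorithm, then use the characterization in Theorem~\ref{thm:starorient} to decide between $p(G)$ and $p(G)+1$ with one more constrained-orientation test (the paper cites the Frank--Gy\'arf\'as algorithm where you spell out the max-flow network, and your capacity function $b$ is equivalent to the paper's $\alpha$, since vertices of degree less than $p(G)$ satisfy either bound automatically).
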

\begin{proof}
 In~\cite{Fra-78} a flow algorithm is used that given a graph $H$ and $\alpha:V(H)\to\mathbb{N}$ decides if an orientation $D$ of $H$ exists such that the out-degree of $v$ in $D$ is at most $\alpha(v)$ for all $v\in V(H)$. Moreover, if such a $D$ exists the algorithm finds one minimizing the maximum out-degree. Now by Lemma~\ref{lem:pseudo}, we may use this algorithm to find $p(H)$ in polynomial-time. Now let $\alpha(v) = p(H)$ whenever $v$ has degree $p(H)$ and $\alpha(v) = p(H)-1$ otherwise. We use the algorithm of~\cite{Fra-78} to check if an orientation $D$ of $H$ satisfying the out-degree constraints given by $\alpha$ exists. By Theorem~\ref{thm:starorient} we have $\lsa(H)=p(H)$ if and only if there exists such an orientation and $\lsa(H)=p(H)+1$ otherwise.
\end{proof}

Finally, consider interval graphs as the covering class. Shmoys and West~\cite{Shm-84} and Jiang~\cite{Jia-10} showed that deciding $i(H) \leq k$ and deciding $t(H)\leq k$ are NP-complete for every $k \geq 2$, respectively. We claim that the reduction of Jiang also holds for the local track number. 

\begin{quest}
 Are there a covering and an input class for which the computation of the folded or local covering number is NP-complete while the global covering number can be computed in polynomial-time?
\end{quest}

\section{Concluding remarks}\label{sec:fur}
We have presented new ways to cover a graph and given many example covering classes. Also, we highlighted some conjectures and questions on the way, such as the question whether the maximum track number of planar graphs is $3$ or $4$ (Question~\ref{quest:localtrackplanar}). 

One conjecture important to us is LLAC (Conjecture~\ref{conj:LLAC}), which is a weakening of the linear arboricity conjecture (LAC). Besides LLAC, there are several more weakenings of LAC that are still open. For example it is open, whether the caterpillar arboricity of graph $H$ of maximum degree $\Delta(H)$ is always at most $\left\lceil\frac{\Delta(H)+1}{2}\right\rceil$. Yet a weaker, but still open, question asks whether the track number of $H$ is always at most $\left\lceil\frac{\Delta(H)+1}{2}\right\rceil$. As a positive result, by Theorem~\ref{thm:starorient} one obtains that for a regular graph $H$ of even degree the local star-arboricity is $\left\lceil\frac{\Delta(H)+1}{2}\right\rceil$, which in particular settles the question for local caterpillar arboricity and local track number for such input graphs. On the other hand, Theorem~\ref{thm:starorient} also tells us that in a regular graph $H$ of odd degree the local star arboricity is larger than $\left\lceil\frac{\Delta(H)+1}{2}\right\rceil$. To the best of our knowledge, it is open whether the local caterpillar arboricity or local track number of such a graph $H$ is always at most $\left\lceil\frac{\Delta(H)+1}{2}\right\rceil$.

Apart from the problems already mentioned throughout the paper, it is interesting to consider the local and folded variants for more graph covering problems from the literature. For example the covering number with respect to planar and outerplanar graphs is known as the \emph{thickness} and \emph{outerthickness}~\cite{Bei-69}, respectively, and the folded covering number with respect to planar graphs is called the \emph{splitting number}~\cite{Jac-85}. The local covering number in these cases seems unexplored. Further interesting covering classes include linear forests of bounded length~\cite{Alo-01}, forests of stars and triangles~\cite{Fia-07}, and chordal graphs.

A concept dual to covering is \emph{packing}. For an input graph $H$ and a class $\mc{G}$ of \emph{packing graphs}, we define a \emph{$\mc{G}$-packing of $H$} to be an edge-injective homomorphism $\varphi$ to $H$ from the disjoint union $G_1 \dotcup G_2 \dotcup \cdots \dotcup G_k$ with $G_i \in \mc{G}$ for $i \in [k]$. The \emph{size} of a packing is the number of packing graphs in the disjoint union. A packing $\varphi$ is \emph{injective} if $\varphi\rst{G_i}$, that is, $\varphi$ restricted to $G_i$, is injective for every $i \in [k]$.

\begin{definition}\label{defn:dualnumbers}
 For a packing class $\mc{G}$ and an input graph $H = (V,E)$ define the \emph{(global) packing number} $p_{g}^{\mc{G}}(H)$, the \emph{local packing number} $p_{\ell}^{\mc{G}}(H)$, and the \emph{folded packing number} $p_{f}^{\mc{G}}(H)$ as follows:
 \begin{itemize}[label= ]
  \item $p_{g}^{\mc{G}}(H) = \max\left\{ \text{size of }\varphi : \varphi \text{ is an injective }\mc{G}\text{-packing of }H\right\}$
  \item $p_{\ell}^{\mc{G}}(H) = \max\left\{ \min_{v \in V} |\varphi^{-1}(v)| : \varphi \text{ is an injective }\mc{G}\text{-packing of }H\right\}$
  \item $p_{f}^{\mc{G}}(H) = \max\left\{ \min_{v \in V} |\varphi^{-1}(v)| : \varphi \text{ is a }\mc{G}\text{-packing of } H \text{ having size } 1\right\}$
 \end{itemize}
\end{definition}

Let us rephrase $p_{g}^{\mc{G}}(H)$, $p_{\ell}^{\mc{G}}(H)$, and $p_{f}^{\mc{G}}(H)$: The packing number is the maximum number of packing graphs that can be packed into the input graph, where packing means identifying edge-disjoint subgraphs in $H$ that lie in $\mc{G}$. The local packing number does not measure the number of packing graphs in a packing; instead the minimum number of graphs packed at any one vertex is maximized. The folded packing number is the maximum $k$ such that every vertex $v$ of $H$ can be split into $k$ vertices, distributing the incident edges at $v$ arbitrarily (not repeatedly) among them, such that the resulting graph is in $\mc{G}$. Two classical packing problems are given by $\mc{G}$ being the class of non-planar graphs or non-outerplanar graphs. In this case the global packing numbers are called \emph{coarseness} and \emph{outercoarseness}~\cite{Bei-69}, respectively.

\bigskip

{\bf Acknowledgments:}
We thank Marie Albenque, Daniel Heldt, and Bartosz Walczak for fruitful discussions and two anonymous referees and Douglas B. West for useful comments improving the presentation of the paper.
Kolja Knauer was partially supported by DFG grant FE-340/8-1 as part of ESF project GraDR EUROGIGA and PEPS grant EROS and Torsten Ueckerdt by GraDR EUROGIGA project No. GIG/11/E023.
\bibliography{lit}
\bibliographystyle{amsplain}

\end{document}